\newtheorem{theorem}{Theorem}
\newtheorem{definition}{Definition}
\newtheorem{claim}{Claim}
\newtheorem{remark}{Remark}
\DeclareMathOperator{\id}{id}
\DeclareMathOperator*{\im}{im}
\DeclareMathOperator*{\Sp}{Sp}
\DeclareMathOperator*{\Ann}{Ann}
\DeclareMathOperator*{\tc}{tc}
\DeclareMathOperator*{\lcm}{lcm}
\DeclareMathOperator*{\Flat}{Flat}
\DeclareMathOperator*{\dir}{dir}
\DeclareMathOperator*{\modp}{mod}
\begin{document}

\title{The Trajectory Coset and Similarity Classes of Affine Maps}
\author{Arieh Yakir}
\date{May 2020}
\maketitle

\begin{abstract}
In this work we define the trajectory coset of an affine map and use it to study the similarity classes of affine maps. 

The characterization of similarity classes of affine maps, was already accomplished in a previous paper \cite{TartarYakir} but in this work, we use the trajectory coset, a tool which allows us to gain a deeper understanding of the interplay between geometry (properties of affine maps) and algebra (properties of linear maps).

We first state the geometric problem of similarity of affine maps. 

We then develop the algebraic tools. The main idea is the development of an invariant which determines whether one coset can be taken to another coset, under isomorphism of modules.

After resolving this problem we go back to geometrical questions, similarity and invariant flats.
\end{abstract}


\section{The Geometric Set-Up} 
In the following ($\mathcal{E},V$) is an affine space over a field $F$.
This means that we are given a data of:
\begin{enumerate}
  \item $V$ is a vector space over the field $F$.
  \item An action of the group $V$ on the set $\mathcal{E}$
  \[\mathcal{E} \times V\longrightarrow \mathcal{E} \]
  satisfying (on top of the axioms for a group action):
  
  For each $A,B\in \mathcal{E}$, there is a unique $\alpha\in V$ such that
  \[A + \alpha = B \]
  
  ( this $\alpha$ is denoted by $\overrightarrow{AB}$ ).
\end{enumerate}
A function $f:\mathcal{E}\longrightarrow \mathcal{E}$ is called an affine map if there exists a linear map $\overline{f}:V\longrightarrow V$ such that for every $A,B\in \mathcal{E}$
\[\overrightarrow{f(A)f(B)}=\overline{f}(\overrightarrow{AB}) \]
The linear map $\overline{f}:V\longrightarrow V$ is called the linear part of $f$.
An affine map is uniquely determined by its linear part $\overline{f}$ and by its action on a single point $A\in \mathcal{E}$.

For a linear map $L:V\longrightarrow V$ there is a standard construction of the module $V^L$ over the ring $F[X]$. 

The underlying additive group is the additive group of $V$. For each $P(X)\in F[X]$ and each $w\in V$, \[P(X)\cdot w:=P(L)(w). \]
Given a linear map $L:V\longrightarrow V$, a key ingredient of our discussion is the submodule $(X-1)\cdot V^L$. Clearly $(X-1)\cdot V^L=\text{im}(L-\text{id}_V)$.

We define now the trajectory coset of an affine map.

\begin{definition} 
Let $f:\mathcal{E}\longrightarrow \mathcal{E}$ be an affine map. Define the trajectory coset of $f$ to be the set: \[\tc(f):=\left\{\overrightarrow{Af(A)}\;\mid\; A\in\mathcal{E} \right\} \]
\end{definition}
We have to prove that the set $\tc(f)$ is a coset of the submodule $(X-1)\cdot V^{\overline{f}}$.
\begin{claim} 
For each $A,B\in\mathcal{E}$, $$\overrightarrow{Af(A)}-\overrightarrow{Bf(B)}=(\id_V-\overline{f})(\overrightarrow{AB})$$
\end{claim}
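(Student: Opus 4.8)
The plan is to reduce the statement to the Chasles (triangle) relation for the difference map $\overrightarrow{\cdot}$ together with the single defining property of the linear part, $\overrightarrow{f(A)f(B)}=\overline{f}(\overrightarrow{AB})$. These are essentially the only tools the set-up provides, so the proof will be a matter of inserting the right intermediate points and keeping track of signs rather than of any conceptual difficulty.

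First I would decompose the trajectory vector $\overrightarrow{Af(A)}$ by traversing the path $A\to B\to f(B)\to f(A)$ and applying additivity of $\overrightarrow{\cdot}$ over intermediate points:
\[\overrightarrow{Af(A)}=\overrightarrow{AB}+\overrightarrow{Bf(B)}+\overrightarrow{f(B)f(A)}.\]
Subtracting $\overrightarrow{Bf(B)}$ from both sides immediately isolates the quantity of interest as $\overrightarrow{Af(A)}-\overrightarrow{Bf(B)}=\overrightarrow{AB}+\overrightarrow{f(B)f(A)}$.

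Next I would rewrite the last summand using antisymmetry, $\overrightarrow{f(B)f(A)}=-\overrightarrow{f(A)f(B)}$, and then apply the linear-part identity to get $\overrightarrow{f(B)f(A)}=-\overline{f}(\overrightarrow{AB})$. Substituting back yields
\[\overrightarrow{Af(A)}-\overrightarrow{Bf(B)}=\overrightarrow{AB}-\overline{f}(\overrightarrow{AB})=(\id_V-\overline{f})(\overrightarrow{AB}),\]
which is exactly the claim.

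Since every step is a formal manipulation inside the affine-space axioms, there is no real obstacle; the only point demanding care is the orientation in the Chasles decomposition. Choosing the traversal $A\to B\to f(B)\to f(A)$ — rather than, say, $A\to f(A)\to f(B)\to B$ — is what makes the image term appear as $\overrightarrow{f(B)f(A)}$ and hence deliver $-\overline{f}(\overrightarrow{AB})$ with the correct sign; a different ordering would still work but would force an extra sign reversal. Once the claim is established it shows that the difference of any two elements of $\tc(f)$ lies in $\im(\id_V-\overline{f})=\im(\overline{f}-\id_V)=(X-1)\cdot V^{\overline{f}}$, which is precisely what is needed to conclude that $\tc(f)$ is a coset of that submodule.
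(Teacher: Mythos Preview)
Your proof is correct and follows essentially the same approach as the paper: both arguments reduce to the Chasles relation plus the defining identity $\overrightarrow{f(A)f(B)}=\overline{f}(\overrightarrow{AB})$, differing only in bookkeeping (the paper inserts the single intermediate point $f(A)$ by adding and subtracting $\overrightarrow{f(A)B}$, whereas you traverse the path $A\to B\to f(B)\to f(A)$). The content is identical.
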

\begin{proof} 
\begin{IEEEeqnarray*}{rCl}
\overrightarrow{Af(A)}-\overrightarrow{Bf(B)}& = &
\left ( \overrightarrow{Af(A)}+\overrightarrow{f(A)B}\right )-\left ( \overrightarrow{f(A)B}+\overrightarrow{Bf(B)}\right )\\
& = & \overrightarrow{AB}-\overrightarrow{f(A)f(B)}   \\
& = & \overrightarrow{AB}-\overline{f}\left (\overrightarrow{AB}\right ) \\
& = &\left (\id_V - \overline{f} \right )\left ( \overrightarrow{AB}\right )
\end{IEEEeqnarray*}
\end{proof}

\begin{claim} 
For each $A,B\in \mathcal{E},$
\[ \overrightarrow{Af(A)}\equiv\overrightarrow{Bf(B)}\quad \left (\;\text{mod}\quad(X-1)\cdot V^{\overline{f}} \right )\]
\qed
\end{claim}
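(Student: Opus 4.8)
The plan is to derive this congruence as an immediate corollary of the preceding claim, so the work will consist mostly of recognizing that the required membership is already established. Recall that two vectors are congruent modulo the submodule $(X-1)\cdot V^{\overline{f}}$ precisely when their difference lies in that submodule. Hence the entire task reduces to showing that $\overrightarrow{Af(A)}-\overrightarrow{Bf(B)}\in(X-1)\cdot V^{\overline{f}}$.

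First I would invoke the previous claim to rewrite the difference as
\[
\overrightarrow{Af(A)}-\overrightarrow{Bf(B)}=(\id_V-\overline{f})\bigl(\overrightarrow{AB}\bigr).
\]
By construction this element lies in the image $\im(\id_V-\overline{f})$. The one point that deserves care is the sign: the submodule in question was identified earlier as $(X-1)\cdot V^{\overline{f}}=\im(\overline{f}-\id_V)$, whereas the claim produces a vector in $\im(\id_V-\overline{f})$. I would resolve this by observing that $\id_V-\overline{f}=-(\overline{f}-\id_V)$, and that the image of a linear map is closed under negation, so
\[
\im(\id_V-\overline{f})=\im(\overline{f}-\id_V)=(X-1)\cdot V^{\overline{f}}.
\]
This is really the only potential obstacle, and it is a trivial one.

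Combining these two observations, the difference $\overrightarrow{Af(A)}-\overrightarrow{Bf(B)}$ lies in $(X-1)\cdot V^{\overline{f}}$ for arbitrary $A,B\in\mathcal{E}$, which is exactly the asserted congruence. I expect no genuine difficulty here: once the prior claim supplies the formula for the difference and the earlier identification $(X-1)\cdot V^{\overline{f}}=\im(\overline{f}-\id_V)$ is recalled, the statement follows in a single line. The value of this claim is not in its proof but in its consequence, namely that $\tc(f)$ is well defined as a single coset of the submodule $(X-1)\cdot V^{\overline{f}}$, independent of the chosen base point.
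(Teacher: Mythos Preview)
Your proposal is correct and matches the paper's approach exactly: the paper simply writes \qed after the statement, treating it as an immediate corollary of the preceding claim, which is precisely what you do. Your extra remark about the sign, $\im(\id_V-\overline{f})=\im(\overline{f}-\id_V)$, is a harmless elaboration of something the paper leaves implicit.
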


\begin{claim} 
Let $A\in\mathcal{E}$ and let $\gamma\in\overrightarrow{Af(A)}+(X-1)\cdot V^{\overline{f}}.$\\
Then there exists a point $B\in \mathcal{E}$, such that $\gamma=\overrightarrow{Bf(B)}$.
\end{claim}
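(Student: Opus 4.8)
The plan is to realize $\gamma$ explicitly as $\overrightarrow{Bf(B)}$ by taking $B$ to be a suitably chosen translate of $A$, exploiting the affine-space axiom which guarantees, for any $u \in V$, a unique point $B = A + u$ with $\overrightarrow{AB} = u$. The engine of the argument is Claim 1, which expresses the difference $\overrightarrow{Af(A)} - \overrightarrow{Bf(B)}$ entirely in terms of the linear part $\overline{f}$ acting on $\overrightarrow{AB}$; the whole task reduces to inverting that formula.

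First I would unwind the hypothesis. Since $\gamma \in \overrightarrow{Af(A)} + (X-1)\cdot V^{\overline{f}}$, I may write $\gamma = \overrightarrow{Af(A)} + v$ with $v \in (X-1)\cdot V^{\overline{f}} = \im(\overline{f} - \id_V)$. The small but essential observation is that $\im(\overline{f} - \id_V) = \im(\id_V - \overline{f})$, because these two maps differ by the invertible scalar $-1$; consequently $-v$ again lies in this image, so I may choose $u \in V$ with $(\id_V - \overline{f})(u) = -v$.

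Next I would invoke the affine-space axiom to set $B := A + u$, so that $\overrightarrow{AB} = u$. Feeding this into Claim 1 yields $\overrightarrow{Af(A)} - \overrightarrow{Bf(B)} = (\id_V - \overline{f})(u) = -v$, and therefore $\overrightarrow{Bf(B)} = \overrightarrow{Af(A)} + v = \gamma$, which is exactly what is required. Combined with the preceding claims, this shows $\tc(f)$ is precisely the coset $\overrightarrow{Af(A)} + (X-1)\cdot V^{\overline{f}}$.

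I do not expect a genuine obstacle here, since the argument is essentially the reverse of the computation behind Claim 2. The one place deserving a moment's care is the passage from $v \in \im(\overline{f} - \id_V)$ to the existence of a preimage of $-v$ under $\id_V - \overline{f}$: one must keep the sign straight and remember that the images of $\overline{f} - \id_V$ and $\id_V - \overline{f}$ coincide. Everything else is a direct application of Claim 1 together with the existence and uniqueness of the point $A + u$.
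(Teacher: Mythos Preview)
Your proof is correct and essentially identical to the paper's: both write $\gamma=\overrightarrow{Af(A)}+(\overline{f}-\id_V)(\alpha)$ for some $\alpha\in V$, set $B:=A+\alpha$, and invoke Claim~1 to conclude $\gamma=\overrightarrow{Bf(B)}$. Your explicit remark that $\im(\overline{f}-\id_V)=\im(\id_V-\overline{f})$ is the only cosmetic difference; the paper simply absorbs this sign into the choice of $\alpha$ from the outset.
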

\begin{proof}
Find $\alpha\in V$ such that: $\gamma =\overrightarrow{A f(A)}+\left( \overline{f}-\id_V\right )(\alpha)$.

Define $B$ by $B:=A+\alpha$.
\begin{IEEEeqnarray*}{rCl}
\overrightarrow{AB}& = & \alpha\\
\overline{Af(A)}-\overrightarrow{Bf(B)}& = &\left ( \text{id}_V-\overline{f}\right )(\overrightarrow{AB})\\
\overline{Af(A)}-\overrightarrow{Bf(B)}& = &\left ( \text{id}_V-\overline{f}\right )(\alpha)
\end{IEEEeqnarray*}
\begin{IEEEeqnarray*}{rCl}
\gamma & = &\overrightarrow{Af(A)} - \left ( \text{id}_V-\overline{f}\right )(\alpha)\\
& = & \overrightarrow{Af(A)}-\left ( \overrightarrow{Af(A)}-\overrightarrow{Bf(B)}\right ) = \overrightarrow{Bf(B)}.
\end{IEEEeqnarray*}
\end{proof}

\begin{claim} 
Let $f:\mathcal{E}\longrightarrow\mathcal{E}$ be an affine map.
Fix a point $A\in\mathcal{E}$. Then:
\[\tc(f)=\overrightarrow{Af(A)}+(x-1)\cdot V^{\overline{f}}\].
\end{claim}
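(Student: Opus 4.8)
The plan is to prove the set equality $\tc(f)=\overrightarrow{Af(A)}+(X-1)\cdot V^{\overline{f}}$ by establishing the two inclusions separately. The pleasant feature of this final statement is that essentially all of the geometric and algebraic work has already been isolated in the three preceding claims, so the proof here amounts to assembling those pieces; I do not expect any genuine obstacle, only the bookkeeping of translating ``congruence modulo the submodule'' back and forth with ``membership in the coset.''

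For the inclusion $\tc(f)\subseteq\overrightarrow{Af(A)}+(X-1)\cdot V^{\overline{f}}$, I would start from the definition $\tc(f)=\{\overrightarrow{Bf(B)}\mid B\in\mathcal{E}\}$ and take an arbitrary element $\overrightarrow{Bf(B)}$. Claim~2 tells us precisely that $\overrightarrow{Bf(B)}\equiv\overrightarrow{Af(A)}\ (\text{mod}\ (X-1)\cdot V^{\overline{f}})$, which by definition of congruence modulo a submodule means the difference $\overrightarrow{Bf(B)}-\overrightarrow{Af(A)}$ lies in $(X-1)\cdot V^{\overline{f}}$. Rearranging, $\overrightarrow{Bf(B)}\in\overrightarrow{Af(A)}+(X-1)\cdot V^{\overline{f}}$, and since $B$ was arbitrary the whole trajectory coset is contained in that coset. (If one prefers to avoid invoking Claim~2 as a black box, the same conclusion follows directly from Claim~1, since $(\id_V-\overline{f})(\overrightarrow{AB})=-(X-1)\cdot\overrightarrow{AB}\in(X-1)\cdot V^{\overline{f}}$.)

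For the reverse inclusion $\overrightarrow{Af(A)}+(X-1)\cdot V^{\overline{f}}\subseteq\tc(f)$, I would take an arbitrary $\gamma\in\overrightarrow{Af(A)}+(X-1)\cdot V^{\overline{f}}$ and apply Claim~3 verbatim: it produces a point $B\in\mathcal{E}$ with $\gamma=\overrightarrow{Bf(B)}$, and by the definition of $\tc(f)$ this exhibits $\gamma$ as a member of the trajectory coset. Combining the two inclusions yields the desired equality. The only point worth a moment's care is checking that the fixed basepoint $A$ really is generic, i.e. that nothing in the argument depended on a special choice of $A$; both Claims~2 and~3 are stated for arbitrary points, so the coset $\overrightarrow{Af(A)}+(X-1)\cdot V^{\overline{f}}$ is indeed independent of $A$, which is exactly the consistency one needs for ``the trajectory coset'' to be well defined.
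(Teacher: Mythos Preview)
Your proof is correct and follows exactly the paper's approach: the paper's own proof consists of the single line ``This claim follows from the previous two claims,'' and you have simply unpacked that remark into the two inclusions, using Claim~2 for $\tc(f)\subseteq\overrightarrow{Af(A)}+(X-1)\cdot V^{\overline{f}}$ and Claim~3 for the reverse.
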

\noindent\textit{Proof:} 
This claim follows from the previous two claims.

\section{Similarity of Affine Maps} 
\begin{definition} 
Let $f,g\in\mathcal{E}\longrightarrow\mathcal{E}$ be affine maps.

We say that $f$ and $g$ are similar (and write $f\sim g$) if there exists an affine isomorphism, $h:\mathcal{E}\longrightarrow\mathcal{E}$, such that $h\circ f\circ h^{-1}=g$
\end{definition}

\begin{claim} 
For every affine map $h:\mathcal{E}\longrightarrow\mathcal{E}$, $h$ is an affine isomorphism, if and only if, the linear map $\overline{h}:V\longrightarrow V$ is a linear isomorphism.
\qed
\end{claim}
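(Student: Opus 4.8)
The engine of this proof is the observation that passing to linear parts is functorial. Concretely, for any two composable affine maps $f$ and $g$ one has $\overline{g\circ f}=\overline{g}\circ\overline{f}$, and the identity map $\id_{\mathcal{E}}$ has linear part $\id_V$; both facts drop out in one line from the defining relation $\overrightarrow{g(A)g(B)}=\overline{g}(\overrightarrow{AB})$, in the same spirit as the computations in the earlier claims. I would record these two facts first, since everything else rests on them together with the principle already noted in the set-up: an affine map is uniquely determined by its linear part and its value at a single point.

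For the forward direction, suppose $h$ is an affine isomorphism, so that it possesses an affine inverse $h^{-1}$ with $h\circ h^{-1}=h^{-1}\circ h=\id_{\mathcal{E}}$. Applying the linear-part functor to both equalities yields $\overline{h}\circ\overline{h^{-1}}=\overline{h^{-1}}\circ\overline{h}=\id_V$, so $\overline{h}$ is invertible with $(\overline{h})^{-1}=\overline{h^{-1}}$, i.e.\ a linear isomorphism.

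For the reverse direction, suppose $\overline{h}$ is a linear isomorphism. I would build an affine inverse by hand. Using the uniqueness principle, fix a point $O\in\mathcal{E}$ and let $g$ be the unique affine map with $\overline{g}=(\overline{h})^{-1}$ and $g(h(O))=O$. The composites $g\circ h$ and $h\circ g$ are affine, with linear parts $(\overline{h})^{-1}\circ\overline{h}=\id_V$ and $\overline{h}\circ(\overline{h})^{-1}=\id_V$ respectively; moreover $g\circ h$ fixes $O$ while $h\circ g$ fixes $h(O)$. An affine map whose linear part is $\id_V$ and which fixes some point is forced, again by the uniqueness principle, to equal $\id_{\mathcal{E}}$, so $g\circ h=h\circ g=\id_{\mathcal{E}}$ and $h$ is an affine isomorphism.

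The calculations here are all routine; the only genuinely delicate point is the intended meaning of \emph{affine isomorphism}. If one reads it as a bijective affine map rather than an affine map admitting an affine inverse, then the forward direction should instead argue bijectivity of $\overline{h}$ directly: injectivity from $h(A)=h(B)\Rightarrow\overline{h}(\overrightarrow{AB})=0\Rightarrow\overrightarrow{AB}=0$, and surjectivity by pulling a prescribed target vector back through the surjective $\overline{h}$ and translating. In either reading the reverse construction above simultaneously shows that invertibility of $\overline{h}$ already produces a genuine affine two-sided inverse, so the two possible notions of affine isomorphism coincide and the stated equivalence holds.
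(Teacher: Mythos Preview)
Your main argument is correct, and there is nothing to compare it against: the paper states this claim with a bare \qed\ and supplies no proof, treating the fact as standard. Your functoriality-plus-reconstruction argument is exactly the kind of verification one would expect here, and it uses only the two ingredients the paper has already made explicit (that $\overline{g\circ f}=\overline{g}\circ\overline{f}$ is implicit in the definition, and that an affine map is determined by its linear part together with its value at one point).

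One small slip in your final paragraph: under the ``bijective'' reading, the forward direction asks you to deduce bijectivity of $\overline{h}$ from bijectivity of $h$, but the implication chain you wrote, $h(A)=h(B)\Rightarrow\overline{h}(\overrightarrow{AB})=0\Rightarrow\overrightarrow{AB}=0$, runs the other way (it uses injectivity of $\overline{h}$ to conclude injectivity of $h$), and likewise your surjectivity sketch pulls back through a surjective $\overline{h}$ to prove $h$ surjective. The correct shape is: given $\overline{h}(v)=0$, write $v=\overrightarrow{AB}$, note $\overrightarrow{h(A)h(B)}=0$, and use injectivity of $h$; for surjectivity of $\overline{h}$, given $w\in V$ pick any $A$, use surjectivity of $h$ to find $B$ with $h(B)=h(A)+w$, and read off $\overline{h}(\overrightarrow{AB})=w$. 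This is a cosmetic issue in a side remark and does not affect your principal proof.
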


\begin{theorem} 
Let $f,g:\mathcal{E}\longrightarrow\mathcal{E}$ be affine maps. If $h:\mathcal{E}\longrightarrow\mathcal{E}$ is an affine isomorphism such that $h\circ f\circ h^{-1}=g$, then the linear isomorphism $\overline{h}:V\longrightarrow V$ is an isomorphism of modules over $F[X]$, $\overline{h}: V^{\overline{f}}\longrightarrow V^{\overline{g}}$, it takes the submodule $(X-1)\cdot V^{\overline{f}}$ to the submodule $(X-1)\cdot V^{\overline{g}}$, and the coset $\tc(f)$ to the coset $\tc(g)$.
\end{theorem}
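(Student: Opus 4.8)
The plan is to pass from the geometric conjugation relation to a linear (intertwining) relation, and then read off all three conclusions from it. First I would record the functoriality of taking linear parts: for composable affine maps one has $\overline{h\circ f}=\overline{h}\circ\overline{f}$, and $\overline{h^{-1}}=(\overline{h})^{-1}$ when $h$ is an isomorphism (the invertibility of $\overline{h}$ being guaranteed by the preceding claim). Applying the bar operation to $h\circ f\circ h^{-1}=g$ then yields $\overline{h}\circ\overline{f}\circ(\overline{h})^{-1}=\overline{g}$, equivalently the intertwining identity $\overline{h}\circ\overline{f}=\overline{g}\circ\overline{h}$. This single identity is the engine for everything that follows.

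Next I would verify that $\overline{h}\colon V^{\overline{f}}\to V^{\overline{g}}$ is an $F[X]$-module isomorphism. As a linear isomorphism, $\overline{h}$ is already additive and commutes with the action of scalars from $F$; the only additional requirement for an $F[X]$-module homomorphism is compatibility with the action of $X$, namely $\overline{h}(\overline{f}(v))=\overline{g}(\overline{h}(v))$ for all $v\in V$. But this is precisely the intertwining identity. Since $\overline{h}$ is bijective, its set-theoretic inverse is then automatically a module homomorphism, so $\overline{h}$ is an isomorphism of modules over $F[X]$.

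For the submodule statement I would simply use that any module isomorphism commutes with multiplication by the ring element $X-1$. Concretely, for each $v\in V$ one has $\overline{h}\bigl((X-1)\cdot v\bigr)=(X-1)\cdot\overline{h}(v)$, and since $\overline{h}$ is surjective the element $\overline{h}(v)$ ranges over all of $V$ as $v$ does; hence $\overline{h}$ carries $(X-1)\cdot V^{\overline{f}}$ onto $(X-1)\cdot V^{\overline{g}}$.

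The trajectory-coset claim is where the geometry re-enters, and it is the step I would handle most carefully. Rewriting the conjugation relation as $g\circ h=h\circ f$ and evaluating at a point $A$ gives $g(h(A))=h(f(A))$. Setting $B:=h(A)$ and invoking the defining property of the affine map $h$ on the pair $A,f(A)$, I would compute $\overline{h}(\overrightarrow{Af(A)})=\overrightarrow{h(A)\,h(f(A))}=\overrightarrow{B\,g(B)}$, which shows $\overline{h}(\tc(f))\subseteq\tc(g)$. Because $h$ is a bijection of $\mathcal{E}$, the point $B=h(A)$ runs over all of $\mathcal{E}$ as $A$ does, which upgrades the inclusion to the equality $\overline{h}(\tc(f))=\tc(g)$. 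The main obstacle, such as it is, lies not in any hard estimate but in correctly threading the point-level identity $g\circ h=h\circ f$ through the definition of the linear part of $h$; once that bookkeeping is done, bijectivity closes the argument.
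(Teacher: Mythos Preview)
Your proof is correct and follows essentially the same route as the paper: derive the intertwining relation $\overline{h}\circ\overline{f}=\overline{g}\circ\overline{h}$ by applying the bar to the conjugation identity, read off the $F[X]$-module isomorphism and the preservation of $(X-1)\cdot V$, and then handle $\tc(f)\mapsto\tc(g)$ via the computation $\overline{h}\bigl(\overrightarrow{Af(A)}\bigr)=\overrightarrow{h(A)\,g(h(A))}$. The only cosmetic difference is that the paper concludes the last step by noting that $\overline{h}$ already sends cosets to cosets, so mapping one representative correctly suffices, whereas you argue the set equality directly from the bijectivity of $h$; both are fine.
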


\begin{proof}
\begin{IEEEeqnarray*}{rCl}
h\circ f\circ h^{-1}&=&g\\
h \circ f&=&g \circ h\\
\overline{h\circ f}&=&\overline{g\circ h}\\
\overline{h}\circ\overline{f}&=&\overline{g}\circ\overline{h}
\end{IEEEeqnarray*}

On the left hand side of this equation, $\overline{f}$ stands for the action of $X$ on $V^{\overline{f}}$, while on the right hand side, $\overline{g}$ stands for the action of $X$ on $V^{\overline{g}}$.

Therefore the equation states that $\overline{h}\cdot X=X\cdot \overline{h}$.

Therefore $\overline{h}$ commutes with every polynomial in $F[X]$.

Since $\overline{h}$ commutes with $X-1$, it takes the submodule $(X-1)\cdot V^{\overline{f}}$ to the submodule $(X-1)\cdot V^{\overline{g}}$, \\
and a coset of $(X-1)\cdot V^{\overline{f}}$ to a coset of $(X-1)\cdot V^{\overline{g}}$ .

It remains to prove that the particular coset $\tc(f)$ is taken by $\overline{h}$ to the particular coset $\tc(\overline{g})$.

This follows because for each $A\in\mathcal{E}$,\\ $\overline{h}\left( \overrightarrow{Af(A)}\right)= \overrightarrow{h(A)h\left(f(A)\right)}=\overrightarrow{h(A)g\left(h(A)\right)}.$

So $\overline{h}$ takes the element $\overrightarrow{Af(A)}$ of $\tc(f)$ to the element $\overrightarrow{h(A)g\left( h(A)\right)}$ of $\tc(g)$.
\end{proof}

Now the converse is also true:
\begin{theorem} 
Let $f,g:\mathcal{E}\longrightarrow\mathcal{E}$ be affine maps.\\
Assume $T:V\longrightarrow V$ is a linear isomorphism satisfying $T\circ\overline{f}\circ T^{-1}=\overline{g}$ (Otherwise put $T\circ \overline{f}=\overline{g}\circ T$ or $T:V^{\overline{f}}\longrightarrow V^{\overline{g}}$ is an isomorphism of modules over the ring $F[X]$).\\
Assume there exist $A,B\in\mathcal{E}$, such that $T\left(\overrightarrow{Af(A)}\right)=\overrightarrow{Bg(B)}$\\
Then:\\
There exists an affine isomorphism $h:\mathcal{E}\longrightarrow \mathcal{E}$ such that $h\circ f\circ h^{-1}=g$ and $\overline{h}=T$.\\
In particular $f\sim g$.
\end{theorem}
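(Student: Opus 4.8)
The plan is to construct $h$ directly, exploiting the fact noted in the excerpt that an affine map is uniquely determined by its linear part together with its value at a single point. Accordingly, I would define $h:\mathcal{E}\to\mathcal{E}$ to be the unique affine map whose linear part is $\overline{h}=T$ and which sends $A$ to $B$; concretely, $h(A+\alpha)=B+T(\alpha)$ for every $\alpha\in V$. Since $T$ is a linear isomorphism, the earlier claim immediately gives that $h$ is an affine isomorphism, so $h^{-1}$ exists and the target relation $h\circ f\circ h^{-1}=g$ is equivalent to $h\circ f=g\circ h$.

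To establish $h\circ f=g\circ h$, I would again invoke that two affine maps coincide as soon as they share the same linear part and agree on one point. First I would compare linear parts: $\overline{h\circ f}=\overline{h}\circ\overline{f}=T\circ\overline{f}$, while $\overline{g\circ h}=\overline{g}\circ\overline{h}=\overline{g}\circ T$, and these are equal precisely by the standing hypothesis $T\circ\overline{f}=\overline{g}\circ T$ (the module-isomorphism condition). So the two composites have identical linear parts, and it remains only to match them at a single point.

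The natural point to test is $A$, and this is exactly where the trajectory-coset hypothesis enters. Writing $f(A)=A+\overrightarrow{Af(A)}$, I would compute
\[
h(f(A))=h\bigl(A+\overrightarrow{Af(A)}\bigr)=B+T\bigl(\overrightarrow{Af(A)}\bigr)=B+\overrightarrow{Bg(B)}=g(B),
\]
where the third equality uses the assumption $T(\overrightarrow{Af(A)})=\overrightarrow{Bg(B)}$. On the other side, $g(h(A))=g(B)$ directly. Hence $(h\circ f)(A)=(g\circ h)(A)=g(B)$, and combined with the equality of linear parts this yields $h\circ f=g\circ h$, whence $h\circ f\circ h^{-1}=g$ and $f\sim g$.

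I do not expect a genuine obstacle here: the argument is a direct construction rather than an existence problem. The only real content is recognizing the correct choice of $h$ — linear part $T$ and $A\mapsto B$ — and observing that the hypothesis $T(\overrightarrow{Af(A)})=\overrightarrow{Bg(B)}$ is precisely the condition needed to force agreement of $h\circ f$ and $g\circ h$ at the single point $A$. In other words, the two hypotheses (the module isomorphism and the matching of trajectory-coset representatives) are engineered to supply exactly the two pieces of data — equal linear parts and one equal point value — that pin down an affine map.
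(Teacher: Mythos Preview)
Your proof is correct and follows essentially the same approach as the paper: define $h$ by $\overline{h}=T$ and $h(A)=B$, then verify $h\circ f=g\circ h$ by checking equality of linear parts (via $T\circ\overline{f}=\overline{g}\circ T$) and agreement at the single point $A$ (via the hypothesis $T(\overrightarrow{Af(A)})=\overrightarrow{Bg(B)}$). The only cosmetic difference is ordering---the paper checks the point first and the linear parts second---and the paper writes the point computation using $\overrightarrow{h(A)h(f(A))}=\overline{h}(\overrightarrow{Af(A)})$ rather than your explicit formula $h(A+\alpha)=B+T(\alpha)$, but these are the same calculation.
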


\begin{proof}
Remember that an affine map is determined by its linear part and by its action on a single point. We can therefore define an affine map $h:\mathcal{E}\longrightarrow\mathcal{E}$, by stipulating that $\overline{h}=T$ and that $h(A)=B$.\\
The affine map $h:\mathcal{E}\longrightarrow\mathcal{E}$ is an affine isomorphism because its linear part $\overline{h}:V\longrightarrow V$ is a linear isomorphism.\\
We assume that:
\begin{IEEEeqnarray*}{rCl}
T\left(\overrightarrow{Af(A)}\right)  &=& \overrightarrow{Bg(B)}\\
 \overline{h}\left(\overrightarrow{Af(A)}\right) &=& \overrightarrow{Bg(B)} \\
 \overrightarrow{h(A)h\left(f(A)\right)} &=& \overrightarrow{Bg(B)}\\
 \overrightarrow{Bh\left(f(A)\right)} &=& \overrightarrow{Bg(B)}
\end{IEEEeqnarray*}
From this we conclude that $h\left(f(A)\right)=g(B)$\\
that is: $h\left(f(A)\right)=g\left(h(A)\right)$\\
$h\circ f(A)=g\circ h(A)$.

Now look at the two affine maps $h\circ f\;,\;g\circ h:\mathcal{E}\longrightarrow \mathcal{E}$.\\
Their linear parts are equal:
\begin{IEEEeqnarray*}{rCl}
\overline{h\circ f}&=& \overline{h}\circ\overline{f}=T\circ\overline{f}=\overline{g}\circ T = \\
 &=& \overline{g}\circ\overline{h} = \overline{g\circ h}.
\end{IEEEeqnarray*}
Also they operate in the same way on the single point $A$:
\[h\circ f(A) = g\circ h(A)\]
From this we conclude that 
\[h\circ f = g\circ h\]
that is $h\circ f\circ h^{-1}=g$
\end{proof}

Let us rephrase the last two theorems:
\begin{theorem}{\label{theo:a}} 
Let $f,g:\mathcal{E}\longrightarrow\mathcal{E}$ be affine maps. Then $f\sim g$, if and only if, there is a linear isomorphism $T:V\longrightarrow V$ such that
\[T\circ \overline{f}\circ T^{-1} = \overline{g} \]
(Otherwise put:
$T:V^{\overline{f}}\longrightarrow V^{\overline{g}}$ is an isomorphism of modules over the ring $F[X]$) and $T$ takes the coset $\tc(f)$ to the coset $\tc(g)$
\end{theorem}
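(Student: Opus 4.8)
The plan is to observe that this theorem is simply the conjunction of the two preceding theorems, so the proof reduces to checking each direction of the biconditional against the appropriate earlier result. I would open by noting that the two conditions to be matched — that $T$ intertwines $\overline{f}$ and $\overline{g}$, and that $T$ carries $\tc(f)$ onto $\tc(g)$ — are exactly the conclusions packaged in the first theorem and exactly the hypotheses (after one small reduction) of its converse.

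For the forward direction, I would assume $f\sim g$, unwind the definition of similarity to obtain an affine isomorphism $h$ with $h\circ f\circ h^{-1}=g$, and then set $T:=\overline{h}$. The first theorem guarantees that this $T$ is a linear isomorphism, that it satisfies $T\circ\overline{f}\circ T^{-1}=\overline{g}$ (equivalently, that it is an isomorphism of modules $V^{\overline{f}}\to V^{\overline{g}}$ over $F[X]$), and that it sends $\tc(f)$ to $\tc(g)$. This is precisely the required $T$, so nothing further is needed here.

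For the converse direction, I would assume the existence of a linear isomorphism $T$ with $T\circ\overline{f}\circ T^{-1}=\overline{g}$ that carries $\tc(f)$ onto $\tc(g)$, and aim to produce the affine isomorphism $h$. The one manoeuvre is to pass from the set-level statement that $T$ takes $\tc(f)$ to $\tc(g)$ to the pointwise statement required by the converse theorem: I fix any $A\in\mathcal{E}$, so that $\overrightarrow{Af(A)}\in\tc(f)$; then $T\left(\overrightarrow{Af(A)}\right)\in T(\tc(f))=\tc(g)$, and since $\tc(g)=\left\{\overrightarrow{Bg(B)}\mid B\in\mathcal{E}\right\}$ there is some $B\in\mathcal{E}$ with $T\left(\overrightarrow{Af(A)}\right)=\overrightarrow{Bg(B)}$. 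With this pair $A,B$ in hand, the converse theorem supplies an affine isomorphism $h$ (indeed with $\overline{h}=T$) satisfying $h\circ f\circ h^{-1}=g$, whence $f\sim g$.

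I do not expect any genuine obstacle, since the theorem is a restatement assembled from results already in hand; the only point demanding care is the bookkeeping just described, namely extracting a single witnessing pair $(A,B)$ from the equality of cosets so that the hypotheses of the converse theorem are met literally. Everything else is immediate from the two earlier theorems.
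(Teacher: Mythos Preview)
Your proposal is correct and matches the paper's approach exactly: the paper presents this theorem explicitly as a rephrasing of the two preceding theorems and gives no separate proof, so your argument---invoking the first theorem for the forward direction and extracting a witnessing pair $(A,B)$ from the equality of trajectory cosets to feed into the converse theorem---is precisely the intended justification, with the bookkeeping spelled out.
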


In our next section we shall introduce an invariant of cosets of the submodule $(X-1)\cdot V^{\overline{f}}$ which determines which coset of $(X-1)\cdot V^{\overline{f}}$
can be taken to which coset of $(X-1)\cdot V^{\overline{g}}$.

\section{The Algebraic Set-Up:} 

Let $V$ be a finite dimensional vector space over a field $F$. Let $L:V\longrightarrow V$ be a linear map. We shall study the $F[X]-\text{module} \; V^L$, and in particular, the cosets of the submodule $(X-1)\cdot V^L$. We shall study an invariant of cosets, called $\tau$, introduced for the first time in \cite{TartarYakir}.

Let $\text{min}_L(X)\in F[X]$ be the minimal (monic) polynomial of $L$.

Write
\[ \text{min}_L(X)=P_1(X)^{n_1}\cdot P_2(X)^{n_2}\cdot P_3(X)^{n_3}\cdot\ldots\cdot P_\ell (X)^{n_\ell},
\]
where for each $i\quad(1\leq i\leq \ell)$ the polynomial $P_i(X)$ is prime (and monic) and for each $i,j\quad (i\neq j)$
\[\text{gcd}\left(P_i(X),\;P_j(X)\right)=1.
\]
For each $i\quad(1\leq i\leq \ell)$ write 
\[V_i^L :=\text{ker}\left(P_i(L)^{n_i}\right).\]
Then $V^L=V_1^L\oplus V_2^L\oplus V_3^L\oplus \ldots\oplus V_\ell^L$ is the primary decomposition of $V^L$ as a module over the ring $F[X]$.
\begin{claim} 
For each $i\quad (1\leq i\leq\ell)$ \\
if $X-1\neq P_i(X)$ then $V_i^L\subseteq (X-1)\cdot V^L$.
\end{claim}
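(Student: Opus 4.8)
The plan is to exploit the coprimality between $X-1$ and the primary factor $P_i(X)^{n_i}$. The key preliminary observation, already recorded in the excerpt, is that $(X-1)\cdot V^L=\im(L-\id_V)$, so it suffices to show that every $v\in V_i^L$ lies in the image of $L-\id_V$.

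First I would record that, since $X-1$ is a monic prime polynomial distinct from $P_i(X)$ by hypothesis, the polynomials $X-1$ and $P_i(X)^{n_i}$ are coprime, i.e.\ $\gcd\bigl(X-1,\,P_i(X)^{n_i}\bigr)=1$. By Bézout's identity in the principal ideal domain $F[X]$, there exist $a(X),b(X)\in F[X]$ with $a(X)(X-1)+b(X)P_i(X)^{n_i}=1$.

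The decisive step is to evaluate this polynomial identity at $L$ and apply it to an arbitrary element $v\in V_i^L$. Reading the identity as an equation of operators on $V$, one gets $a(L)(L-\id_V)(v)+b(L)P_i(L)^{n_i}(v)=v$. Because $V_i^L=\ker\bigl(P_i(L)^{n_i}\bigr)$, the second summand vanishes, leaving $v=a(L)(L-\id_V)(v)$. Since these are all polynomials in $L$ they commute, so $v=(L-\id_V)\bigl(a(L)(v)\bigr)$; in module language, $v=(X-1)\cdot\bigl(a(X)\cdot v\bigr)$. Hence $v\in(X-1)\cdot V^L$, and as $v$ was arbitrary this yields $V_i^L\subseteq(X-1)\cdot V^L$.

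I do not expect any genuine obstacle here; the only point requiring a little care is the bookkeeping between the two descriptions of the submodule — the module-theoretic $(X-1)\cdot V^L$ and the operator-theoretic $\im(L-\id_V)$ — together with the commutativity of polynomials in $L$, which is precisely what lets the factor $a(L)$ pass through $L-\id_V$ so that $v$ is exhibited as $(X-1)$ acting on an element of $V^L$.
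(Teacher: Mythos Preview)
Your proof is correct and follows essentially the same approach as the paper: both use a B\'ezout identity $a(X)(X-1)+b(X)P_i(X)^{n_i}=1$ arising from the coprimality of $X-1$ and $P_i(X)^{n_i}$, apply it to an element of $V_i^L$, and observe that the second summand vanishes. Your additional remarks on commutativity and the identification $(X-1)\cdot V^L=\im(L-\id_V)$ are merely elaborations of what the paper leaves implicit.
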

\begin{proof}
Find $R(X), S(X)\in F[X]$, \\
such that $R(X)\cdot (X-1)+S(X)\cdot P_i(X)^{n_i}=1$.

For each $\alpha\in V$
\begin{IEEEeqnarray*}{rCl}
\left(R(X)\cdot(X-1)+S(X)\cdot P_i(X)^{n_i}\right)\cdot\alpha&=&1\cdot\alpha\\
R(X)\cdot(X-1)\cdot\alpha + S(X)\cdot P_i(X)^{n_i}\cdot\alpha &=&\alpha
\end{IEEEeqnarray*}
But if $\alpha\in V_i^L$ then $P_i(X)^{n_i}\cdot\alpha=\overline{0}$.

Hence: $$ R(X)\cdot(X-1)\cdot\alpha=\alpha,$$
so $$ \alpha\in (X-1)\cdot V^L.$$
\end{proof}

The following claim  is the first in an effort to find "simple" representatives of cosets of $(X-1)\cdot V^L$ in $V^L$.
\begin{claim}\label{claim:a} 
If the polynomial $X-1$ does not divide the polynomial $\min\hbox{}_L(X)$, then $(X-1)\cdot V^L=V^L$.

If, however, $X-1\;\mid\;\min\hbox{}_L(X)$, say $X-1=P_1(X)$, then for each $\alpha\in V^L$ there exists an $\alpha '\in V_1^L$ such that
$$\alpha\equiv\alpha '\quad (\modp \; (X-1)\cdot V^L) $$
\end{claim}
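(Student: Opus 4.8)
The plan is to prove both halves of Claim~\ref{claim:a} using the primary decomposition $V^L = V_1^L \oplus V_2^L \oplus \cdots \oplus V_\ell^L$ together with the preceding claim, which tells us that every primary component $V_i^L$ with $P_i(X) \neq X-1$ is already contained in $(X-1)\cdot V^L$.

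For the first statement, suppose $X-1$ does not divide $\min_L(X)$. Then for every index $i$ we have $P_i(X) \neq X-1$, so by the previous claim each summand $V_i^L$ is contained in $(X-1)\cdot V^L$. Since these summands together span all of $V^L$, I would conclude that $V^L \subseteq (X-1)\cdot V^L$, and the reverse inclusion is trivial, giving $(X-1)\cdot V^L = V^L$.

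For the second statement, suppose $X-1 \mid \min_L(X)$ and relabel so that $X-1 = P_1(X)$. Given an arbitrary $\alpha \in V^L$, I would decompose it according to the primary decomposition as
\[
\alpha = \alpha_1 + \alpha_2 + \cdots + \alpha_\ell, \qquad \alpha_i \in V_i^L .
\]
Set $\alpha' := \alpha_1 \in V_1^L$. Then the difference is
\[
\alpha - \alpha' = \alpha_2 + \alpha_3 + \cdots + \alpha_\ell .
\]
For each $i \geq 2$ we have $P_i(X) \neq P_1(X) = X-1$, so the previous claim gives $\alpha_i \in (X-1)\cdot V^L$. Since $(X-1)\cdot V^L$ is a submodule, the sum $\alpha_2 + \cdots + \alpha_\ell$ lies in $(X-1)\cdot V^L$, which is exactly the statement that $\alpha \equiv \alpha' \pmod{(X-1)\cdot V^L}$.

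I do not anticipate a serious obstacle here, since the heavy lifting has already been done in the preceding claim; the only thing to be careful about is invoking the primary decomposition correctly and noting that $(X-1)\cdot V^L$ is closed under addition so that a sum of elements lying in it again lies in it. The mild subtlety worth flagging is that the second statement genuinely needs the hypothesis $X-1 \mid \min_L(X)$ only to guarantee that $V_1^L$ is the \emph{one} primary component we are allowed to keep; the argument itself works by discarding all the components whose associated prime differs from $X-1$.
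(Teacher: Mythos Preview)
Your proposal is correct and follows essentially the same route as the paper: in both cases you invoke the preceding claim to see that every primary component with $P_i(X)\neq X-1$ lies in $(X-1)\cdot V^L$, then use the primary decomposition either to absorb all of $V^L$ (first case) or to set $\alpha':=\alpha_1$ and observe that $\alpha-\alpha'=\sum_{i\geq 2}\alpha_i\in(X-1)\cdot V^L$ (second case). This matches the paper's proof almost line for line.
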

\begin{proof}
If $X-1 \nmid \min\hbox{}_L(X)$, then for each $i\quad(1\leq i\leq\ell)$
\begin{IEEEeqnarray*}{lrCl}
          & V_i^L& \subseteq & (X-1)\cdot V^L,\\
\text{so}\quad & V^L& \subseteq & (X-1)\cdot V^L,\\
\text{so}\quad & V^L& = & (X-1)\cdot V^L.
\end{IEEEeqnarray*}
If $X-1\mid \min\hbox{}_L(X)$, say $X-1=P_1(X)$, then write $\alpha = \sum_{i=1}^{\ell}\alpha_i$ where for each $i\quad(1\leq i\leq\ell)\quad \alpha_i\in V^L_i$.\\
Write $\alpha':=\alpha_1$ and $\alpha'':=\sum_{i=2}^{\ell}\alpha_i$.\\
It follows that $\alpha=\alpha'+\alpha''$, $\alpha'\in V_1^L$ and $\alpha''\in(X-1)\cdot V^L$.
\end{proof}

We are going to introduce now the main tool in our study, the invariant $\tau(\alpha,L)$, an invariant which first appeared in \cite{TartarYakir}.
\begin{definition} 
Let $V$ be a finite dimensional vector space over a field $F$.\\
Let $L:V\longrightarrow V$ be a linear map.\\
For each $\alpha\in V$ define $\tau(\alpha,L)$ to be the smallest $k\in\mathbb{N}$, such that
$$\alpha\in\im(L-\id_V)+\ker\left(\left(L-\id_V\right)^k\right). $$
$(\mathbb{N}:=\{0,\,1,\,2,\,3,\ldots\})$
\end{definition}
\begin{remark} 
\begin{enumerate}
    \item For each $\alpha\in V$,\\
    $\tau(\alpha,L)=0$, if and only if, $\alpha\in (X-1)\cdot V^L$.
    \item If $X-1\; \nmid \;\min_L(X)$, then for each $\alpha\in V$, $\tau(\alpha,L)=0$
    \item If $X-1 \;\mid \;\min_L(X)$,\\
    say $X-1=P_1(X)$ (where $\min_L(X)=P_1(X)^{n_1}\cdot P_2(X)^{n_2}\cdot\ldots\cdot P_\ell(X)^{n_\ell}$)\\
    then for each $\alpha\in V,\quad \tau(\alpha,L)\leq n_1$.
    \item For each $\alpha,\;\alpha'\in V$, if $\alpha\equiv \alpha'\quad\left(\mod (X-1)\cdot V^L\right)$,\\
    then $\tau(\alpha, L)=\tau(\alpha', L)$.
\end{enumerate}
\end{remark}
\begin{definition} 
For each $\alpha\in V^L$, write $\langle\alpha\rangle:=\Sp_{F[X]}(\alpha)$.
In other words:
$$\langle\alpha\rangle:=\left\{R(X)\cdot\alpha\;\mid\;R(X)\in F[X]\right\}. $$
\end{definition}
\begin{claim}\label{claim:b} 
Assume $X-1=P_1(X)$ and that $\alpha\in V_1^L$ and $\beta\in\langle\alpha\rangle$. \\
Then either $\beta\in(X-1)\cdot V^L$ or $\langle\beta\rangle=\langle\alpha\rangle$.
\end{claim}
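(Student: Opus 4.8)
The plan is to exploit the cyclic structure of $\langle\alpha\rangle$ together with the fact that, since $\alpha\in V_1^L$, the annihilator of $\alpha$ is a power of $X-1$. First I would pin down this annihilator. Because $\alpha\in V_1^L=\ker\!\left((L-\id_V)^{n_1}\right)$ and $P_1(X)=X-1$, we have $(X-1)^{n_1}\cdot\alpha=\overline 0$. Hence the monic generator of $\Ann(\alpha)$ divides $(X-1)^{n_1}$ and must therefore equal $(X-1)^d$ for some integer $d$ with $0\le d\le n_1$. (If $\alpha=\overline 0$ then $d=0$ and both alternatives hold trivially, so I may assume $\alpha\neq\overline 0$.)

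Next, since $\beta\in\langle\alpha\rangle$, I would write $\beta=R(X)\cdot\alpha$ for some $R(X)\in F[X]$, and then let the whole argument branch according to whether $X-1$ divides $R(X)$. In the first branch, if $X-1\mid R(X)$, write $R(X)=(X-1)\cdot Q(X)$; then
\[
\beta=(X-1)\cdot\bigl(Q(X)\cdot\alpha\bigr)\in(X-1)\cdot V^L,
\]
which is exactly the first alternative in the statement.

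In the second branch, if $X-1\nmid R(X)$, then $R(X)$ is coprime to $(X-1)^d$ (their only possible common prime factor is $X-1$), so by Bézout there exist $A(X),B(X)\in F[X]$ with $A(X)\cdot R(X)+B(X)\cdot(X-1)^d=1$. Multiplying by $\alpha$ and using $(X-1)^d\cdot\alpha=\overline 0$ gives
\[
A(X)\cdot\beta=A(X)\cdot R(X)\cdot\alpha=\alpha,
\]
so $\alpha\in\langle\beta\rangle$. Together with $\beta\in\langle\alpha\rangle$ this forces $\langle\alpha\rangle=\langle\beta\rangle$, the second alternative. The two branches are exhaustive, so this completes the dichotomy.

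The key point — and the only place where any thought is required — is recognizing that the entire dichotomy is governed by divisibility of the coefficient polynomial $R$ by $X-1$, which in turn rests on $\Ann(\alpha)$ being a power of $X-1$. Conceptually this is the statement that $\langle\alpha\rangle\cong F[X]/\bigl((X-1)^d\bigr)$ is a local ring whose non-units are precisely the multiples of $X-1$: multiplying $\alpha$ by a unit preserves the cyclic module it generates, while multiplying by a non-unit pushes $\beta$ into $(X-1)\cdot V^L$. Everything else is the routine Bézout inversion carried out above, so I do not expect any genuine obstacle beyond isolating this local-ring observation.
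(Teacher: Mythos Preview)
Your proof is correct and follows essentially the same route as the paper: identify $\Ann(\alpha)=\langle (X-1)^d\rangle$, write $\beta=R(X)\cdot\alpha$, and split on whether $X-1\mid R(X)$, using B\'ezout in the non-dividing case to recover $\alpha\in\langle\beta\rangle$. The only differences are cosmetic (variable names, your explicit handling of $\alpha=\overline{0}$, and the closing local-ring remark).
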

\begin{proof}
Put $\Ann(\alpha):=\left\{R(X)\in F[X]\;\mid\;R(X)\cdot\alpha=\overline{0}\right\}$.

Since $\alpha\in V_1^L$ it follows that $\left\langle P_1(X)^{n_1}\right\rangle\;\subseteq\;\Ann(\alpha)$.

This means that there is a $k\in\mathbb{N},\; k\leq n_1$, such that $\Ann(\alpha)=\left\langle P_1(X)^k\right\rangle$. 

Since $\beta\in\langle\alpha\rangle$ there is a polynomial $Q(X)\in F[X]$, such that $\beta =Q(X)\cdot\alpha$. 

If $P_1(X)\;\mid\;Q(X)$ then $\beta\in (X-1)\cdot V^L$. 

If $P_1(X)\;\nmid\; Q(X)$ then $\gcd\left(Q(X) \, ,\, P_1(X)^k\right)=1$.

Find $R(X), S(X)\in F[X]$, such that $R(X)\cdot Q(X) + S(X)\cdot P_1(X)^k=1$
\begin{IEEEeqnarray*}{rCl}
\left(R(X)\cdot Q(X)+S(X)\cdot P_1(X)^k\right)\cdot\alpha &=& \alpha\\
R(X)\cdot Q(X) \cdot\alpha + S(X)\cdot P_1(X)^k\cdot\alpha&=&\alpha
\end{IEEEeqnarray*}
\begin{IEEEeqnarray*}{lrCl}
\text{But}\quad &P_1(X)^k\cdot\alpha & = & \overline{0}\\
\text{So}\quad & R(X)\cdot Q(X)\cdot\alpha & = & \alpha\\
 &R(X)\cdot\beta & = &\alpha \\
 \text{So}\quad & &\alpha\in\langle\beta\rangle& 
\end{IEEEeqnarray*}\
and we conclude that $\langle\alpha\rangle=\langle\beta\rangle$.
\end{proof}
\begin{claim}\label{claim:c} 
Assume $X-1=P_1(X)$ and fix $V_1^L=\langle\gamma_1\rangle\oplus\langle\gamma_2\rangle\oplus\langle\gamma_3\rangle\oplus\cdots\oplus\langle\gamma_t\rangle$ any decomposition of $V_1^L$ into a direct sum of cyclic submodules.

Then, for each $\alpha\in V^L$ there exist $\alpha_1,\alpha_2,\alpha_3,\cdots,\alpha_t\in V_1^L$, such that
$$\alpha\equiv\sum_{i=1}^{t}\alpha_i\quad(\modp (X-1)\cdot V^L)  $$ 
and for each index $i\;(1\leq i \leq t)$ either $\alpha_i =\overline{0}$ or $\langle\alpha_i\rangle = \langle\gamma_i\rangle$.
\end{claim}
\begin{proof}
Let $\alpha\in V^L$. \\
Use first Claim \ref{claim:a} to find $\alpha'\in V_1^L$ such that $\alpha\equiv\alpha'\quad (\modp (X-1)\cdot V^L) $.

Write then $\alpha'$ as: $\alpha'=\alpha_1'+\alpha_2'+\alpha_3'+\ldots+\alpha_t'$ \\
where for each index $i\quad(1\leq i\leq t)\quad \alpha_i'\in\langle\gamma_i\rangle$.

Now, for each index $i\quad(1\leq i\leq t)$ define:
\begin{equation*}
\alpha_i :=
\begin{cases}
\overline{0} & \alpha_i'\in (X-1)\cdot V^L\\
\alpha_i' & \alpha_i'\notin (X-1)\cdot V^L
\end{cases}
\end{equation*}
Clearly $\sum_{i=1}^{k}\alpha_i\equiv\sum_{i=1}^{k}\alpha'_i\quad(\modp (X-1)\cdot V^L)$.

It is also clear (Claim \ref{claim:b}) that for each index $i\;(1\leq i\leq t)$ if $\alpha'_i\notin (X-1)\cdot V^L$, then $\langle\alpha_i\rangle=\langle\alpha'_i\rangle=\langle\gamma_i\rangle$.
\end{proof}

\begin{claim}{\label{claim:d}} 
Let $R$ be a ring, and let $W$ be a module over $R$.\\ 
Let $\varepsilon_1,\,\varepsilon_2,\,\varepsilon_3,\ldots ,\varepsilon_m \in W$.

Assume that $W=\langle\varepsilon_1\rangle\oplus\langle\varepsilon_2\rangle\oplus\langle\varepsilon_3\rangle\oplus\ldots\oplus\langle\varepsilon_m\rangle$.

Assume further that for each $i\;(1\leq i\leq m-1)$
$$\Ann(\varepsilon_i)\supseteq\Ann(\varepsilon_m) $$
Write $\varepsilon'_m:=\varepsilon_1 +\varepsilon_2 +\varepsilon_3 +\ldots +\varepsilon_m$

Then $\Ann(\varepsilon'_m)=\Ann(\varepsilon_m)$

and $W=\langle\varepsilon_1\rangle\oplus\langle\varepsilon_2\rangle\oplus\ldots\oplus\langle\varepsilon_{m-1}\rangle\oplus\langle\varepsilon'_m\rangle$.

Otherwise put: $\varepsilon'_m$ can be used instead of $\varepsilon_m$.
\end{claim}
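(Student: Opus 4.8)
The plan is to prove the two assertions in sequence, first pinning down $\Ann(\varepsilon'_m)$ and then exploiting that computation to verify the modified direct-sum decomposition. Throughout I use that a cyclic submodule $\langle\varepsilon_i\rangle$ is exactly $\{s\varepsilon_i \mid s \in R\}$, so that any $R$-multiple of $\varepsilon_i$ sits inside the $i$-th summand of the given decomposition.

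First I would establish $\Ann(\varepsilon'_m) = \Ann(\varepsilon_m)$ by double inclusion. For $\Ann(\varepsilon_m) \subseteq \Ann(\varepsilon'_m)$, take $r \in \Ann(\varepsilon_m)$; the hypothesis $\Ann(\varepsilon_i) \supseteq \Ann(\varepsilon_m)$ gives $r\varepsilon_i = \overline{0}$ for every $i$, hence $r\varepsilon'_m = \sum_{i=1}^m r\varepsilon_i = \overline{0}$. For the reverse inclusion, if $r\varepsilon'_m = \overline{0}$ then $\sum_{i=1}^m r\varepsilon_i = \overline{0}$ is a relation in $W = \langle\varepsilon_1\rangle\oplus\cdots\oplus\langle\varepsilon_m\rangle$; since each $r\varepsilon_i$ lies in its own summand, directness forces $r\varepsilon_i = \overline{0}$ for all $i$, in particular $r\varepsilon_m = \overline{0}$.

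Next, for the decomposition, I would check spanning and directness separately. Spanning is immediate: from $\varepsilon_m = \varepsilon'_m - (\varepsilon_1 + \cdots + \varepsilon_{m-1})$ it follows that $\varepsilon_m \in \langle\varepsilon_1\rangle + \cdots + \langle\varepsilon_{m-1}\rangle + \langle\varepsilon'_m\rangle$, so this sum contains every original generator and therefore equals $W$. The substantive step is directness. Suppose $\sum_{i=1}^{m-1} s_i\varepsilon_i + s\,\varepsilon'_m = \overline{0}$ with $s_1,\ldots,s_{m-1},s \in R$. Expanding $s\,\varepsilon'_m = \sum_{i=1}^m s\varepsilon_i$ and collecting terms rewrites this as $\sum_{i=1}^{m-1}(s_i + s)\varepsilon_i + s\varepsilon_m = \overline{0}$, which is a relation in the original direct sum; its directness yields $s\varepsilon_m = \overline{0}$ and $(s_i + s)\varepsilon_i = \overline{0}$ for $i < m$.

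Finally I would close the argument using the hypothesis together with the first part. From $s\varepsilon_m = \overline{0}$ we get $s \in \Ann(\varepsilon_m)$, so $s\varepsilon_i = \overline{0}$ for each $i < m$ by the containment hypothesis; substituting this into $(s_i + s)\varepsilon_i = \overline{0}$ leaves $s_i\varepsilon_i = \overline{0}$, so every summand $s_i\varepsilon_i$ vanishes. For the last summand, $s \in \Ann(\varepsilon_m) = \Ann(\varepsilon'_m)$ by the first part, whence $s\,\varepsilon'_m = \overline{0}$ as well. Thus all summands are zero and the sum is direct. I expect the only real obstacle to be the bookkeeping in the directness step: the key realization is that the new relation must first be converted into a relation in the original direct sum, and that killing the final summand requires invoking both the annihilator-containment hypothesis and the equality $\Ann(\varepsilon'_m) = \Ann(\varepsilon_m)$ proved at the outset.
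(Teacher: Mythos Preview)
Your proof is correct and follows essentially the same route as the paper: compute $\Ann(\varepsilon'_m)$ first, check spanning via $\varepsilon_m = \varepsilon'_m - \sum_{i<m}\varepsilon_i$, and then verify directness by expanding a relation in the new generators into one in the old and reading off $s\in\Ann(\varepsilon_m)=\Ann(\varepsilon'_m)$. The only cosmetic difference is that the paper compresses your double-inclusion argument for the annihilator into the single line $\Ann(\varepsilon'_m)=\bigcap_{i=1}^{m}\Ann(\varepsilon_i)=\Ann(\varepsilon_m)$.
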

\begin{proof}
$$\Ann(\varepsilon'_m)=\bigcap_{i=1}^{m}\Ann(\varepsilon_i)=\Ann(\varepsilon_m). $$
As to the second assertion, clearly
$$\varepsilon_1,\,\varepsilon_2,\ldots ,\varepsilon_{m-1},\,\varepsilon_m\,\in\, \langle\varepsilon_1\rangle +\langle\varepsilon_2\rangle +\dots +\langle\varepsilon_{m-1}\rangle +\langle\varepsilon'_m\rangle .$$
This implies that
$$W\;\subseteq\; \langle\varepsilon_1\rangle +\langle\varepsilon_2\rangle +\dots +\langle\varepsilon_{m-1}\rangle +\langle\varepsilon'_m\rangle$$
So
$$W\;=\; \langle\varepsilon_1\rangle +\langle\varepsilon_2\rangle +\dots +\langle\varepsilon_{m-1}\rangle +\langle\varepsilon'_m\rangle$$
It remains to show that the sum of submodules on the right, is
a direct sum.

Let $r_1,\, r_2,\ldots ,\,r_{m-1},\, s\in R$, such that $r_1\cdot\varepsilon_1 +r_2\cdot\varepsilon_2 +\ldots +r_{m-1}\cdot\varepsilon_{m-1} +s\cdot\varepsilon'_m =0 $.

$(r_1+s)\cdot\varepsilon_1+(r_2+s)\cdot\varepsilon_2+\ldots +(r_{m-1}+s)\cdot\varepsilon_{m-1}+s\cdot\varepsilon_m=\overline{0}$.

Hence, $s\cdot\varepsilon_m=\overline{0}$, \\
so $s\in\Ann(\varepsilon_m)$, \\
so for each $i\;(1\leq i\leq m-1)\; s\in\Ann(\varepsilon_i)$.

Also, for each $i\;(1\leq i\leq m-1)\;(r_i+s)\cdot\varepsilon_i=\overline{0}$, \\
so $r_i+s\in\Ann(\varepsilon_i)$, \\
so $r_i\in\Ann(\varepsilon_i)$.

To conclude: For each $i\;(1\leq i\leq m-1)\;r_i\in\Ann(\varepsilon_i)$ \\
so $r_i\cdot\varepsilon_i=\overline{0}$, \\
and also $s\in\Ann(\varepsilon_m)$ \\
(but $\Ann(\varepsilon_m)=\Ann(\varepsilon'_m)$) \\
so $s\cdot\varepsilon'_m=\overline{0}$.

This shows that the sum of submodules is indeed direct.
\end{proof}

The following claim will be the last in our quest of "simple" representatives $(\modp\;(X-1)\cdot V^L)$.

\begin{claim}{\label{claim:e}} 
Assume $X-1=P_1(X)$. For each $\alpha\in V^L$, there exists an $\alpha'\in V_1^L$, such that the cyclic submodule $\langle\alpha'\rangle$ is complemented in $V_1^L$, and such that
$$\alpha\equiv\alpha'\quad(\modp (X-1)\cdot V^L). $$
\end{claim}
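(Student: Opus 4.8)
The plan is to build the desired representative in two stages: first use Claim~\ref{claim:c} to replace $\alpha$ by a representative supported on the chosen cyclic generators of $V_1^L$, and then use the replacement Lemma, Claim~\ref{claim:d}, to fold those supported pieces into a \emph{single} cyclic summand. The final step is to invoke transitivity of the ``is a direct summand of'' relation to pass from a summand of $V_1^L$ to $V_1^L$ itself. The one hypothesis I will have to arrange by hand is the chain condition on annihilators required by Claim~\ref{claim:d}, and this is where the primary (local) nature of $V_1^L$ does the work.

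First I would fix once and for all a cyclic decomposition $V_1^L=\langle\gamma_1\rangle\oplus\cdots\oplus\langle\gamma_t\rangle$. Applying Claim~\ref{claim:c} to $\alpha$ produces $\alpha_1,\ldots,\alpha_t\in V_1^L$ with $\alpha\equiv\sum_{i=1}^{t}\alpha_i\quad(\modp (X-1)\cdot V^L)$, where each $\alpha_i$ is either $\overline{0}$ or satisfies $\langle\alpha_i\rangle=\langle\gamma_i\rangle$. Let $S:=\{\,i\mid\alpha_i\neq\overline{0}\,\}$ and set $\alpha':=\sum_{i\in S}\alpha_i$; this is my candidate representative. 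Since $\langle\alpha_i\rangle=\langle\gamma_i\rangle$ for $i\in S$, the fixed decomposition may be rewritten as $V_1^L=\left(\bigoplus_{i\in S}\langle\alpha_i\rangle\right)\oplus\left(\bigoplus_{i\notin S}\langle\gamma_i\rangle\right)$, and I write $W:=\bigoplus_{i\in S}\langle\alpha_i\rangle$ for the first summand. It then remains only to show that $\langle\alpha'\rangle$ is complemented in $V_1^L$, as $\alpha\equiv\alpha'\quad(\modp (X-1)\cdot V^L)$ holds by construction. (If $S=\emptyset$ then $\alpha'=\overline{0}$ and $\langle\overline{0}\rangle=\{\overline{0}\}$ is trivially complemented, so I may assume $S\neq\emptyset$.)

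The key observation is that the annihilators $\Ann(\alpha_i)$, $i\in S$, form a chain under inclusion. Because $V_1^L=\ker\left((L-\id_V)^{n_1}\right)$ is annihilated by $(X-1)^{n_1}$, each $\Ann(\alpha_i)=\Ann(\gamma_i)$ has the form $\left\langle(X-1)^{k_i}\right\rangle$ with $0\le k_i\le n_1$ (exactly as in the proof of Claim~\ref{claim:b}), and such ideals are totally ordered, since $\left\langle(X-1)^a\right\rangle\supseteq\left\langle(X-1)^b\right\rangle$ precisely when $a\le b$. Hence there is an index $i_0\in S$ with $k_{i_0}$ maximal, and therefore $\Ann(\alpha_i)\supseteq\Ann(\alpha_{i_0})$ for every $i\in S$. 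Reordering so that $i_0$ is listed last, I would apply Claim~\ref{claim:d} to the module $W$ with $\alpha_{i_0}$ playing the role of $\varepsilon_m$ and $\varepsilon'_m=\sum_{i\in S}\alpha_i=\alpha'$; it yields $W=\left(\bigoplus_{i\in S,\,i\neq i_0}\langle\alpha_i\rangle\right)\oplus\langle\alpha'\rangle$, so $\langle\alpha'\rangle$ is a direct summand of $W$.

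Finally, combining the two decompositions gives $V_1^L=\langle\alpha'\rangle\oplus\left(\bigoplus_{i\in S,\,i\neq i_0}\langle\alpha_i\rangle\oplus\bigoplus_{i\notin S}\langle\gamma_i\rangle\right)$, which exhibits $\langle\alpha'\rangle$ as complemented in $V_1^L$ and completes the argument. The only genuinely delicate point is verifying the chain condition on the annihilators, which is what lets me choose the maximal-order index $i_0$ and thereby meet the hypothesis of Claim~\ref{claim:d}; the remaining steps are bookkeeping with the cyclic decomposition and the fact that a direct summand of a direct summand is again a direct summand.
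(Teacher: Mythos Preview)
Your proof is correct and follows essentially the same approach as the paper: apply Claim~\ref{claim:c}, collect the nonzero $\alpha_i$'s, pick the one with maximal exponent in its annihilator so that Claim~\ref{claim:d} applies, and then pass from a summand of $W$ to a summand of $V_1^L$. The only cosmetic differences are notation ($S$ versus the paper's $J$, $i_0$ versus $m$) and that you make the chain condition on the annihilators slightly more explicit than the paper does.
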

\begin{proof}
Start with a fixed decomposition $V_1^L=\langle\gamma_1\rangle\oplus\langle\gamma_2\rangle\oplus\langle\gamma_3\rangle\oplus\dots\oplus\langle\gamma_t\rangle$ of the module $V_1^L$ into a direct sum of cyclic submodules.

Now, let $\alpha\in V^L$.

According to Claim \ref{claim:c} there exist $\alpha_1,\,\alpha_2,\ldots,\alpha_t\in V_1^L$, such that
$$\alpha\equiv\sum_{i=1}^{t}\alpha_i\quad\left(\modp\, (X-1)\cdot V_1^L\right) $$
and such that for each index $i\;(1\leq i\leq t)$ either $\alpha_i=\overline{0}$, or $\langle\alpha_i\rangle=\langle\gamma_i\rangle$.

Write $$J:=\left\{ i\;\mid \; 1\leq i\leq t,\;\alpha_i\neq\overline{0}\right\}$$
(If $J=\emptyset$ then $\alpha'=\overline{0}$ and we are done).

Write $$\alpha':=\sum_{i\in J}\alpha_i$$
(It is also true that $\alpha'=\sum_{i=1}^{t}\alpha_i$).

For each $i\in J$, $\alpha_i\in V_1^L$ so $$P_1(X)^{n_1}\in\Ann (\alpha_i).$$
Thus there exists a $d_i\;(1\leq d_i\leq n_1)$ such that $$\Ann(\alpha_i)=\left\langle P_1(X)^{d_i}\right\rangle.$$
Fix an index $m\in J$ such that for each $i\in J,\; d_m\geq d_i$.
Thus for each $i\in J$, $$\Ann(\alpha_m)\subseteq\Ann (\alpha_i).$$
We plan to use Claim \ref{claim:d}.
Let $$W:=\bigoplus_{i\in J}\langle \alpha_i\rangle.$$
Write $$J':=J\smallsetminus\{m\}.$$
According to Claim \ref{claim:d}, the decomposition $$W=\bigoplus_{i\in J}\langle \alpha_i\rangle$$ can be replaced with another decomposition $$W=\left(\bigoplus_{i\in J'}\langle\alpha_i\rangle\right)\oplus\langle\alpha'\rangle.$$
Now, the submodule $\langle\alpha'\rangle$ is complemented in $W$, and $W$ is complemented in $V_1^L$, so $\langle\alpha'\rangle$ is complemented in $V_1^L$.
\end{proof}
\begin{definition} 
Assume $X-1=P_1(X)$. Let $\alpha\in V^L$. According to Claim \ref{claim:e} there is an $\alpha'\in V_1^L$, such that $\alpha\equiv\alpha'\;\left(\mod\;(X-1)\cdot V^L\right)$ and such that the cyclic submodule $\langle\alpha'\rangle$ is complemented in $V_1^L$. Call $\alpha'$ a simple representative of $\alpha$.
\end{definition}
We shall make now a short excursion into the study of cyclic submodules of $V^L$.

We have (as usual) $V$, a finite dimensional vector space over $F$. \\
Let $L:V\longrightarrow V$ be a linear map. For each $\alpha\in V$, denote by $\langle\alpha\rangle$ the cyclic submodule of $V^L$ generated by $\alpha$. Thus
$$\langle\alpha\rangle := \left\{R(X)\cdot\alpha\,\mid\,R(X)\in F[X]\right\}. $$
\begin{claim}\label{claim:f} 
Let $L:V\longrightarrow V$ be a linear map. Fix a polynomial $P(X)\in F[X]$, such that $\deg\left(P(X)\right)=1$. Then:
\begin{enumerate}
    \item For every $F-$subspace $U$ of $V$, $U$ is invariant under $L$, if and only if, $U$ is invariant under $P(L)$.
    \item For every $\alpha\in V^L$, the cyclic submodule $\langle\alpha\rangle$ is spanned (as a vector space over $F$) by the infinite sequence
    $$\alpha,\,P(X)\cdot\alpha,\,P(X)^2\cdot\alpha,\,P(X)^3\cdot\alpha,\,P(X)^4\cdot\alpha,\ldots $$
    \item Let $P(X)^d\cdot\alpha$ be the first non-pivotal element in the sequence above. (That is the first element which is spanned over $F$ by previous elements). Then the linear dependence (over $F$):
    $$\sum_{i=0}^{d-1}a_i\left(P(X)^i\cdot\alpha\right)=P(X)^d\cdot\alpha , $$
    when multiplied by $P(X)^j$, shows that for each $j\;(j\in\mathbb{N})$ the element $P(X)^{d+j}\cdot\alpha$ is non-pivotal, so $\dim\left(\langle\alpha\rangle\right)=d$.
    \item Denote by $L'$, the restriction of $L$ to $\langle\alpha\rangle$, that is 
    $$L':=L\upharpoonright\langle\alpha\rangle . $$
    Then the linear dependence above gives rise to $\min_{L'}(X)$, the minimal polynomial of $L'$.
    $$\min\hbox{}_{L'}(X)=\sum_{i=0}^{d-1}(-a_i)\cdot P(X)^i + P(X)^d$$
\end{enumerate}
\end{claim}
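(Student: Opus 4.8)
The plan is to handle the four assertions in sequence, exploiting throughout that a degree-one polynomial $P(X)=aX+b$ (with $a\neq 0$) is tied to $X$ by an invertible affine substitution, so that each of $L$ and $P(L)$ is a degree-one polynomial in the other. For part 1, I would prove both directions at once using the elementary fact that an $F$-subspace invariant under a linear map $M$ is automatically invariant under every polynomial in $M$. Since $P(L)=aL+b\,\id_V$ is a polynomial in $L$, invariance under $L$ forces invariance under $P(L)$; and since $a\neq 0$ I can solve $L=a^{-1}\bigl(P(L)-b\,\id_V\bigr)$, exhibiting $L$ as a polynomial in $P(L)$, so the reverse implication follows identically. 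For part 2, the key observation is that $\{1,P(X),P(X)^2,\ldots,P(X)^k\}$ and $\{1,X,X^2,\ldots,X^k\}$ span the same space of polynomials of degree $\leq k$ for every $k$, because the change of basis between them is triangular with nonzero diagonal (each $P(X)^i$ has degree exactly $i$). Applying these polynomials to $\alpha$ gives $\Sp_F\{P(X)^i\cdot\alpha : i\geq 0\}=\Sp_F\{X^i\cdot\alpha : i\geq 0\}=\langle\alpha\rangle$.

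Part 3 is the heart of the matter. By the choice of $d$, the vectors $\alpha,\,P(X)\cdot\alpha,\,\ldots,\,P(X)^{d-1}\cdot\alpha$ are linearly independent over $F$ while $P(X)^d\cdot\alpha$ lies in their span, which is exactly the displayed relation. I would then prove by induction on $j$ that every $P(X)^{d+j}\cdot\alpha$ lies in $\Sp_F\{\alpha,\ldots,P(X)^{d-1}\cdot\alpha\}$: multiplying the relation by $P(X)^j$ writes $P(X)^{d+j}\cdot\alpha$ as an $F$-combination of $P(X)^{j}\cdot\alpha,\ldots,P(X)^{d+j-1}\cdot\alpha$, and each of these is either one of the first $d$ vectors (when its exponent is $<d$) or, by the inductive hypothesis, already in their span. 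Together with part 2 this shows the first $d$ vectors form a basis of $\langle\alpha\rangle$, so $\dim\left(\langle\alpha\rangle\right)=d$.

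Finally, for part 4 set $Q(X):=P(X)^d-\sum_{i=0}^{d-1}a_i\,P(X)^i$. The relation of part 3 says precisely $Q(L)\alpha=\overline{0}$, and since $\langle\alpha\rangle$ is cyclic with generator $\alpha$, the operator $Q(L)$ then annihilates every element $R(X)\cdot\alpha$ of $\langle\alpha\rangle$, i.e.\ $Q(L')=0$. Now $\deg Q=d=\dim\left(\langle\alpha\rangle\right)=\deg\min_{L'}(X)$, the last equality because the annihilator of a cyclic $F[X]$-module is generated by its minimal polynomial, whose degree equals the module's dimension; since $\min_{L'}$ divides $Q$ and the degrees agree, the two coincide up to the leading scalar, and for the monic $P$ of interest here (namely $P(X)=X-1$) that scalar is $1$, giving the formula exactly as stated. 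I expect the main obstacle to be the bookkeeping in the part 3 induction, making precise that \emph{first non-pivotal} simultaneously guarantees the independence of the earlier terms and the propagation of non-pivotality to all later ones; part 4 then rests on the standard structure theory of cyclic modules and is essentially a degree count once $Q(L')=0$ is in hand.
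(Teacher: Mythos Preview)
Your proposal is correct. The paper itself gives no proof of this claim at all; it simply writes ``An easy exercise.'' Your argument supplies precisely the routine details the author omits, and the one subtlety you flag---that the displayed formula in part~4 equals $\min_{L'}(X)$ only up to the leading coefficient of $P(X)^d$, hence literally only when $P$ is monic---is a genuine (if minor) imprecision in the statement that you handle appropriately by noting the intended application has $P(X)=X-1$.
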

\begin{proof}
An easy exercise.
\end{proof}
\begin{claim}{\label{claim:g}} 
Assume $X-1=P_1(X)$. Let $\gamma\in V_1^L$ and assume that the cyclic submodule $\langle\gamma\rangle$ is complemented in $V_1^L$. Then
$$\Ann(\gamma)=\left\langle P_1(X)^{\tau (\gamma ,L)}\right\rangle . $$
\end{claim}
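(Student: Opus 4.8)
The plan is to show that the single exponent $d$ determined by $\Ann(\gamma)=\langle P_1(X)^d\rangle$ is exactly $\tau(\gamma,L)$. First I would pin down $d$: since $\gamma\in V_1^L$ we have $P_1(X)^{n_1}\cdot\gamma=\overline{0}$, so $P_1(X)^{n_1}\in\Ann(\gamma)$; because $F[X]$ is a principal ideal domain and $P_1(X)=X-1$ is prime, $\Ann(\gamma)=\langle P_1(X)^d\rangle$ for a unique $d$ with $0\le d\le n_1$. If $d=0$ then $\gamma=\overline{0}$ and $\tau(\gamma,L)=0$, so the claim is immediate; hence I assume $d\ge 1$ and aim to prove the two inequalities $\tau(\gamma,L)\le d$ and $\tau(\gamma,L)\ge d$.

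The inequality $\tau(\gamma,L)\le d$ is easy and uses no hypothesis: $P_1(X)^d\cdot\gamma=\overline{0}$ says $(L-\id_V)^d(\gamma)=\overline{0}$, so $\gamma\in\ker\bigl((L-\id_V)^d\bigr)\subseteq\im(L-\id_V)+\ker\bigl((L-\id_V)^d\bigr)$, whence $\tau(\gamma,L)\le d$. The reverse inequality is where the hypothesis that $\langle\gamma\rangle$ is complemented in $V_1^L$ becomes essential. I would first upgrade the complement to all of $V^L$: writing $V_1^L=\langle\gamma\rangle\oplus C$ and combining with the primary decomposition $V^L=V_1^L\oplus V_2^L\oplus\cdots\oplus V_\ell^L$, one gets $V^L=\langle\gamma\rangle\oplus\widetilde{C}$ with $\widetilde{C}:=C\oplus V_2^L\oplus\cdots\oplus V_\ell^L$ an $F[X]$-submodule. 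Let $\pi:V^L\to\langle\gamma\rangle$ be the projection along $\widetilde{C}$. Being a projection onto a submodule along a submodule, $\pi$ is $F[X]$-linear, so it commutes with $L$ and with $L-\id_V$.

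Now suppose for contradiction that $\tau(\gamma,L)\le d-1$, that is $\gamma=(L-\id_V)(v)+w$ for some $v\in V$ and some $w\in\ker\bigl((L-\id_V)^{d-1}\bigr)$. Applying $\pi$ and using $\pi(\gamma)=\gamma$ together with the fact that $\pi$ commutes with $L-\id_V$, I obtain $\gamma=(L-\id_V)(\pi(v))+\pi(w)$ with $(L-\id_V)^{d-1}(\pi(w))=\pi\bigl((L-\id_V)^{d-1}w\bigr)=\overline{0}$. Since $\pi(v),\pi(w)\in\langle\gamma\rangle$, I can write $\pi(v)=A(X)\cdot\gamma$ and $\pi(w)=B(X)\cdot\gamma$. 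The condition $(X-1)^{d-1}B(X)\in\Ann(\gamma)=\langle(X-1)^d\rangle$ forces $(X-1)\mid B(X)$, say $B(X)=(X-1)B'(X)$; substituting yields $\gamma=(X-1)\bigl(A(X)+B'(X)\bigr)\cdot\gamma$, i.e. $\bigl(1-(X-1)C(X)\bigr)\cdot\gamma=\overline{0}$ with $C:=A+B'$. Then $1-(X-1)C(X)\in\langle(X-1)^d\rangle$, so $(X-1)^d\mid 1-(X-1)C(X)$; as $d\ge1$ this gives $(X-1)\mid 1$, a contradiction. Hence $\tau(\gamma,L)\ge d$, and with the first inequality $\tau(\gamma,L)=d$, which is precisely $\Ann(\gamma)=\langle P_1(X)^{\tau(\gamma,L)}\rangle$.

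The one genuinely load-bearing step — and the only place the hypothesis is used — is the existence of the $F[X]$-linear retraction $\pi$ onto $\langle\gamma\rangle$, which is exactly what ``$\langle\gamma\rangle$ is complemented'' provides. Without it one cannot transport the decomposition $\gamma=(L-\id_V)(v)+w$ into the cyclic module $\langle\gamma\rangle$, and the conclusion genuinely fails in that case (for instance $\gamma=(L-\id_V)(w)$ inside an indecomposable $\langle w\rangle$ has $\tau(\gamma,L)=0$ while $\Ann(\gamma)=\langle X-1\rangle$). Everything following the application of $\pi$ is routine divisibility bookkeeping in the PID $F[X]$, so I expect no difficulty there.
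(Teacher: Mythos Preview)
Your proof is correct. Both your argument and the paper's hinge on the same move: upgrade the complement in $V_1^L$ to a direct-sum decomposition $V^L=\langle\gamma\rangle\oplus W$ and use it to see that a hypothetical witness $\gamma\in\im(L-\id_V)+\ker\bigl((L-\id_V)^{d-1}\bigr)$ can be pushed entirely into $\langle\gamma\rangle$, where it becomes impossible. The execution differs, though. The paper works linear-algebraically: it invokes Claim~\ref{claim:f} to produce the explicit $F$-basis $\gamma,\,P_1(X)\gamma,\ldots,P_1(X)^{d-1}\gamma$ of $\langle\gamma\rangle$, computes $\im\bigl(P_1(L)\upharpoonright\langle\gamma\rangle\bigr)$ and $\ker\bigl(P_1(L)^{d-1}\upharpoonright\langle\gamma\rangle\bigr)$ by dimension count, and observes that both land in $\Sp\bigl(P_1(X)\gamma,\ldots,P_1(X)^{d-1}\gamma\bigr)$, a subspace not containing $\gamma$. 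You instead use the $F[X]$-linear retraction $\pi$ onto $\langle\gamma\rangle$ and do divisibility bookkeeping in $F[X]$, arriving at the contradiction $(X-1)\mid 1$. Your route is slightly more module-theoretic and sidesteps the basis lemma entirely; the paper's route is more hands-on and makes the obstruction visible as a concrete codimension-one subspace. Both are equally short, and neither gains real generality over the other.
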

\begin{proof}
Since $\gamma\in V_1^L ,\, P_1(X)^{n_1}\cdot\gamma=\overline{0}$, so there exists $d\in\mathbb{N},\, d\leq n_1$, such that
$$\Ann(\gamma)=\langle P_1(X)^d\rangle . $$
\end{proof}
From Claim \ref{claim:f} it follows that the sequence
$$\gamma,\,P_1(X)\cdot\gamma,\,P_1(X)^2\cdot\gamma, \ P_1(X)^3\cdot\gamma,\ldots,P_1(X)^{d-1}\cdot\gamma $$
is a basis for the cyclic submodule $\langle\gamma\rangle$ (regarded as a vector space over $F$).

We regard the map
$$P_1(L)\upharpoonright\langle\gamma\rangle\; :\;\langle\gamma\rangle\longrightarrow\langle\gamma\rangle $$
Let us study its kernel and its image.

Clearly
$$P_1(X)^{d-1}\cdot\gamma\in\ker(P_1(L)\upharpoonright\langle\gamma\rangle) $$
(so $\dim\left(\ker\left(P_1(L)\upharpoonright\langle\gamma\rangle\right)\right)\geq 1$) and clearly
$$P_1(X)\cdot\gamma,\,P_1(X)^2\cdot\gamma, \ P_1(X)^3\cdot\gamma,\ldots,P_1(X)^{d-1}\cdot\gamma\in\im\left(P_1(L)\upharpoonright\langle\gamma\rangle\right) $$
so $$\dim\left(\im\left(P_1(L)\upharpoonright\langle\gamma\rangle\right)\right) \geq d-1 .$$
But since 
$$\dim\left(\ker\left(P_1(L)\upharpoonright\langle\gamma\rangle\right)\right) + \dim\left(\im\left(P_1(L)\upharpoonright\langle\gamma\rangle\right)\right) =d ,$$
it follows that
\begin{IEEEeqnarray*}{rCll}
\dim\left(\ker\left(P_1(L)\upharpoonright\langle\gamma\rangle\right)\right) & = & 1 & \text{and}\\
\dim\left(\im\left(P_1(L)\upharpoonright\langle\gamma\rangle\right)\right) & = & d-1 & 
\end{IEEEeqnarray*}
In particular
$$\im\left(P_1(L)\upharpoonright\langle\gamma\rangle\right) = \Sp\left(P_1(X)\cdot\gamma,\,P_1(X)^2\cdot\gamma,\, P_1(X)^3\cdot\gamma,\ldots,P_1(X)^{d-1}\cdot\gamma\right). $$
Our assumption is that the cyclic submodule $\langle\gamma\rangle$ is complemented in $V_1^L$. Remember that the primary submodule $V_1^L$ is complemented in the module $V^L$. Therefore $\langle\gamma\rangle$ is complemented in $V^L$. There exists therefore an $F[X]-$submodule $W$ of $V^L$, such that $$\langle\gamma\rangle\oplus W= V^L .$$
It follows that
$$\im\left(P_1(L)\right)\subseteq \Sp\left(P_1(X)\cdot\gamma,\,P_1(X)^2\cdot\gamma,\, P_1(X)^3\cdot\gamma,\ldots,P_1(X)^{d-1}\cdot\gamma\right)+W $$
Also:
$$\ker\left(P_1(L)^{d-1}\right)\subseteq \Sp\left(P_1(X)\cdot\alpha,\,P_1(X)^2\cdot\alpha,\, P_1(X)^3\cdot\alpha\ldots,P_1(X)^{d-1}\cdot\gamma\right)+W$$
So $\gamma\notin\im\left(P_1(L)\right)+\ker\left(P_1(L)^{d-1}\right)$ and therefore $\tau(\gamma,L)>d-1$. 

But
$$\Ann(\gamma)=\left\langle P_1(X)^d\right\rangle, $$
so
$$\gamma\in\ker\left(P_1(L)^d\right) $$
Thus
$$\tau(\gamma,L)=d . $$
Therefore
$$\Ann(\gamma)=\left\langle P_1(X)^d\right\rangle = \left\langle P_1(X)^{\tau(\gamma,L)}\right\rangle .$$

And now to our main algebraic result:
\begin{theorem}{\label{theo:b}} 
Let $V$ be finite dimensional vector space over a field $F$.
Let $L,M:V\longrightarrow V$ two linear maps. Assume that $L\sim M$ (L is similar to M) which means of course that the $F[X]-$modules $V^L$ and $V^M$ are isomorphic.

Then, for every $\alpha,\beta\in V$, \\
there exists an $F[X]-$isomorphism $T:V^L\longrightarrow V^M$,\\ 
taking the coset $\alpha+(X-1)\cdot V^L$ to the coset $\beta+(X-1)\cdot V^M$, \\
if and only if $\tau(\alpha,L)=\tau(\beta,M)$.
\end{theorem}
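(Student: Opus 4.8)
The plan is to prove the two implications separately. The forward direction --- that the existence of such a $T$ forces $\tau(\alpha,L)=\tau(\beta,M)$ --- is essentially formal, obtained by transporting the data defining $\tau$ across the isomorphism. The converse, which requires constructing $T$ by hand, is where the real content lies, and I expect its one genuinely non-trivial step to be a cancellation argument for modules over $F[X]$.

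For the forward direction, suppose $T:V^L\longrightarrow V^M$ is an $F[X]$-isomorphism carrying $\alpha+(X-1)\cdot V^L$ to $\beta+(X-1)\cdot V^M$. Being $F[X]$-linear, $T$ commutes with the action of $X$, hence $T\circ(L-\id_V)=(M-\id_V)\circ T$. From this I would deduce that $T$ maps $\im(L-\id_V)$ onto $\im(M-\id_V)$ and, for every $k$, maps $\ker\left((L-\id_V)^k\right)$ onto $\ker\left((M-\id_V)^k\right)$; consequently $T$ carries $\im(L-\id_V)+\ker\left((L-\id_V)^k\right)$ onto $\im(M-\id_V)+\ker\left((M-\id_V)^k\right)$. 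Therefore $\alpha$ lies in the former exactly when $T(\alpha)$ lies in the latter, so the least admissible $k$ coincide and $\tau(\alpha,L)=\tau(T(\alpha),M)$. Since $T(\alpha)\equiv\beta\pmod{(X-1)\cdot V^M}$, item 4 of the Remark gives $\tau(T(\alpha),M)=\tau(\beta,M)$, and the two values of $\tau$ agree.

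For the converse, write $d:=\tau(\alpha,L)=\tau(\beta,M)$. If $d=0$ then $\alpha\in(X-1)\cdot V^L$ and $\beta\in(X-1)\cdot V^M$ by item 1 of the Remark, and any $F[X]$-isomorphism $T$ (one exists because $L\sim M$) already carries the zero coset to the zero coset, since $T$ commutes with $X-1$; this also subsumes the case $X-1\nmid\min_L(X)$. Assume therefore $d\geq 1$, so that $X-1=P_1(X)$ divides $\min_L(X)$. Using Claim \ref{claim:e} I would replace $\alpha,\beta$ by simple representatives $\alpha'\in V_1^L$ and $\beta'\in V_1^M$, so that $\langle\alpha'\rangle$ is complemented in $V_1^L$ (hence in $V^L$) and likewise $\langle\beta'\rangle$ in $V^M$. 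By Claim \ref{claim:g} together with item 4 of the Remark, $\Ann(\alpha')=\left\langle P_1(X)^d\right\rangle=\Ann(\beta')$, so the assignment $\alpha'\mapsto\beta'$ defines an $F[X]$-isomorphism $\langle\alpha'\rangle\longrightarrow\langle\beta'\rangle$ (both summands being $\cong F[X]/\langle P_1(X)^d\rangle$).

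It remains to extend this isomorphism of summands to all of $V^L$. Choose complements $W_L,W_M$ with $V^L=\langle\alpha'\rangle\oplus W_L$ and $V^M=\langle\beta'\rangle\oplus W_M$. Since $V^L\cong V^M$ and $\langle\alpha'\rangle\cong\langle\beta'\rangle$, the crux is to show $W_L\cong W_M$, and this is the step I expect to be the main obstacle: it is a cancellation statement. Because finitely generated torsion modules over the PID $F[X]$ decompose uniquely, up to isomorphism, into primary cyclic pieces, Krull--Schmidt cancellation applies, and deleting one copy of the indecomposable $F[X]/\langle P_1(X)^d\rangle$ from each of the isomorphic modules $V^L$ and $V^M$ yields $W_L\cong W_M$. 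Fixing any $F[X]$-isomorphism $W_L\longrightarrow W_M$ and combining it with $\alpha'\mapsto\beta'$ produces an $F[X]$-isomorphism $T:V^L\longrightarrow V^M$ with $T(\alpha')=\beta'$. Finally, since $\alpha-\alpha'\in(X-1)\cdot V^L$ and $T$ maps $(X-1)\cdot V^L$ onto $(X-1)\cdot V^M$, I would conclude that $T(\alpha)\equiv T(\alpha')=\beta'\equiv\beta\pmod{(X-1)\cdot V^M}$, so $T$ carries $\alpha+(X-1)\cdot V^L$ to $\beta+(X-1)\cdot V^M$, as required. Everything apart from the cancellation step is a routine unwinding of definitions and an application of Claims \ref{claim:e} and \ref{claim:g}.
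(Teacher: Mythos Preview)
Your proof is correct and follows essentially the same strategy as the paper: the forward direction transports $\im$ and $\ker$ across $T$, and the converse passes to simple representatives via Claim~\ref{claim:e}, identifies their annihilators via Claim~\ref{claim:g}, and extends to a global isomorphism. The only cosmetic difference is that where you invoke Krull--Schmidt cancellation to obtain $W_L\cong W_M$, the paper instead builds matched primary cyclic decompositions of $V^L$ and $V^M$ directly, with $\langle\alpha'\rangle$ and $\langle\beta'\rangle$ as designated summands---both arguments rest on the same uniqueness of elementary divisors.
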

\begin{proof}
\begin{enumerate}
    \item Assume first that there exists an $F[X]-$isomorphism\\ 
    $T:V^L\longrightarrow V^M$ which takes the coset $\alpha + (X-1)\cdot V^L$ to the coset $\beta +(X-1)\cdot V^M $.
    For every $k\in\mathbb{N}$, if $\tau(\alpha,L)\leq k$ then
    \begin{IEEEeqnarray*}{rCl}
    \alpha & \in & (X-1)\cdot V^L+\ker\left(\left(L-\id_V\right)^k\right),\\
    T(\alpha) & \in & (X-1)\cdot V^M+\ker\left(\left(M-\id_V\right)^k\right),\\
    \beta & \in & (X-1)\cdot V^M+\ker\left(\left(M-\id_V\right)^k\right)
    \end{IEEEeqnarray*}
    (because $T(\alpha)\equiv\beta \quad(\modp(X-1)\cdot V^M)$).
    
    So $\tau(\beta,M)\leq k$.
    
    This shows that $\tau(\beta,M)\leq\tau(\alpha,L)$.\\
    The same proof using $T^{-1}:V^M\longrightarrow V^L$, \\
    will show that $\tau(\alpha,L)\leq\tau(\beta, M).$
    \item Conversely, assume that $\tau(\alpha,L)=\tau(\beta, M)$. We assume of course that $V^L\cong V^M$. Now, if $\tau(\alpha, L)=0$ (and then $\tau(\beta, M)=0$) then
    $$ \alpha\in(X-1)\cdot V^L\;,\;\beta\in(X-1)\cdot V^M.$$
    Every $F[X]-$isomorphism $T:V^L\longrightarrow V^M$,\\
    takes the submodule $(X-1)\cdot V^L$ to the submodule $(X-1)\cdot V^M$.
    
    Assume now that $\tau(\alpha,L)\geq 1$. Choose $\alpha'\in V_1^L$, a simple representative of $\alpha$. That is 
    $$\alpha\equiv\alpha'\quad(\modp(X-1)\cdot V^L) $$
    and $\langle\alpha'\rangle$ is a complemented submodule in $V_1^L$.
    
    In the same manner, choose $\beta'\in V_1^M$, a simple representative of $\beta$. That is 
    $$\beta\equiv\beta'\quad(\modp (X-1)\cdot V^M) $$
    and $\langle\beta'\rangle$ is a complemented submodule in $V_1^M$.
    
    Now (using Claim \ref{claim:g})
    \begin {IEEEeqnarray*}{rCl}
    \Ann(\alpha') & = &\left\langle P_1(X)^{\tau(\alpha',L)}\right\rangle\\
     & = & \left\langle P_1(X)^{\tau(\alpha, L)} \right\rangle.
    \end{IEEEeqnarray*}
    Also 
    \begin {IEEEeqnarray*}{rCl}
    \Ann(\beta') & = &\left\langle P_1(X)^{\tau(\beta',M)}\right\rangle\\
     & = & \left\langle P_1(X)^{\tau(\beta, M)} \right\rangle.
    \end{IEEEeqnarray*}
    Our assumption is that
    $$\tau(\alpha,L)=\tau(\beta, M)\quad\text{so}\quad
    \Ann(\alpha')=\Ann(\beta').$$
    
    We shall construct an isomorphism $T:V^L\longrightarrow V^M$ \\
    such that $T(\alpha')=\beta'$.
    
    How shall we do it?
    
    Decompose $V^L$ into a direct sum of primary components. (This is a unique decomposition).
    
    Decompose then each primary component into a direct sum of cyclic submodules (this is not unique) taking care that one of the cyclic submodules of $V_1^L$ will be $\langle\alpha'\rangle$. This can be done because the submodule $\langle\alpha'\rangle$ is complemented in the primary submodule $V_1^L$.
    
    In the same manner decompose the module $V^M$ into a direct sum of primary components and then decompose each primary component into a direct sum of cyclic submodules, taking care that one of the cyclic submodules of $V_1^M$ will be $\langle\beta'\rangle$. This can be done because the submodule $\langle\beta'\rangle$ is complemented in the primary submodule $V_1^M$.
    
    All in all we have a decomposition
    \begin{IEEEeqnarray*}{rCl}
     V^L & = & \bigoplus_{i\in Y}\langle\alpha_i\rangle\\
     V^M & = & \bigoplus_{i\in Y}\langle\beta_i\rangle
    \end{IEEEeqnarray*}
    taking care that for each $i\in Y$,
    $$\Ann(\alpha_i)=\Ann(\beta_i) $$
    and taking care also that for the same index $i_0\in Y$,
    $$\alpha_{i_0}=\alpha'\; ,\; \beta_{i_0}=\beta' .$$
    We then define $T:V^L\longrightarrow V^M$, by stipulating that for every $i\in Y$,
    $$T(\alpha_i):=\beta_i $$
    (and in particular $T(\alpha_{i_0})=\beta_{i_0}$ that is $T(\alpha')=\beta'$).
\end{enumerate}
\end{proof}

\section{Back to Geometry} 

Let $(\mathcal{E},V)$ be an affine space over a field $F$. We assume that $V$ is finite dimensional.

For an affine map $f:\mathcal{E}\longrightarrow\mathcal{E}$, we defined the trajectory coset of $f$ to be
$$\tc(f)=\left\{\overrightarrow{Af(A)}\;\mid\; A\in \mathcal{E}\right\}. $$
We proved that $\tc(f)$ is indeed a coset of the submodule $(X-1)\cdot V^{\overline{f}}$ of $V^{\overline{f}}$.

\begin{definition} 
Let $f:\mathcal{E}\longrightarrow\mathcal{E}$ be an affine map. Choose $A\in \mathcal{E}$ and define
$$\tau(f):=\tau\left(\overrightarrow{Af(A)},\overline{f}\right). $$
\end{definition}
Notice that the definition does not depend on the choice of the point $A\in\mathcal{E}$. If $B\in\mathcal{E}$ then 
$$\overrightarrow{Af(A)}\equiv\overrightarrow{Bf(B)}\;(\mod (X-1)\cdot V^{\overline{f}}) . $$
Notice also that $\tau(f)$ is the smallest $k\in\mathbb{N}$, such that
$$\overrightarrow{Af(A)}\in (X-1)\cdot V^{\overline{f}} + \ker\left(\left(\overline{f}-\id_V\right)^k\right) . $$
Our main theorem is:
\begin{theorem} 
Let $(\mathcal{E},V)$ be an affine space over a field $F$. Assume $V$ is finite dimensional.

Then, for every affine maps $f,g:\mathcal{E\longrightarrow\mathcal{E}}$, $f\sim g$ if and only if, the linear maps $\overline{f},\overline{g}:V\longrightarrow V$ are similar (we write it $\overline{f}\sim\overline{g}$) and $\tau(f)=\tau(g)$.
\end{theorem}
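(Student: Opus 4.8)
The plan is to derive this theorem as a direct synthesis of the two principal results already established: Theorem \ref{theo:a}, which translates the geometric relation $f\sim g$ into the existence of an $F[X]$-module isomorphism carrying $\tc(f)$ to $\tc(g)$, and Theorem \ref{theo:b}, which governs exactly when such a coset-preserving isomorphism exists in terms of the invariant $\tau$. The bridge between the two is provided by the two identities we have on record: for any fixed $A\in\mathcal{E}$ we have $\tc(f)=\overrightarrow{Af(A)}+(X-1)\cdot V^{\overline{f}}$, so $\tc(f)$ is literally a coset of the type handled by Theorem \ref{theo:b}, and by definition $\tau(f)=\tau\left(\overrightarrow{Af(A)},\overline{f}\right)$, so $\tau(f)$ is the corresponding coset invariant. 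Throughout I would fix once and for all base points $A,B\in\mathcal{E}$ and write $\alpha:=\overrightarrow{Af(A)}$ and $\beta:=\overrightarrow{Bg(B)}$; both $\tc(\cdot)$ and $\tau(\cdot)$ are independent of these choices, so nothing is lost.

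For the forward implication, suppose $f\sim g$. By Theorem \ref{theo:a} there is a linear isomorphism $T:V\longrightarrow V$ with $T\circ\overline{f}\circ T^{-1}=\overline{g}$, which immediately gives $\overline{f}\sim\overline{g}$ and exhibits $T$ as an $F[X]$-module isomorphism $T:V^{\overline{f}}\longrightarrow V^{\overline{g}}$. Theorem \ref{theo:a} further guarantees that $T$ carries $\tc(f)$ to $\tc(g)$, that is, it takes the coset $\alpha+(X-1)\cdot V^{\overline{f}}$ to the coset $\beta+(X-1)\cdot V^{\overline{g}}$. Feeding this existing isomorphism into the necessity half (part 1) of Theorem \ref{theo:b} yields $\tau(\alpha,\overline{f})=\tau(\beta,\overline{g})$, which is precisely $\tau(f)=\tau(g)$.

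For the converse, assume $\overline{f}\sim\overline{g}$ and $\tau(f)=\tau(g)$. The first hypothesis says exactly that the $F[X]$-modules $V^{\overline{f}}$ and $V^{\overline{g}}$ are isomorphic, and the second rewrites as $\tau(\alpha,\overline{f})=\tau(\beta,\overline{g})$. These are exactly the hypotheses of the sufficiency half (part 2) of Theorem \ref{theo:b}, which therefore produces an $F[X]$-module isomorphism $T:V^{\overline{f}}\longrightarrow V^{\overline{g}}$ taking $\alpha+(X-1)\cdot V^{\overline{f}}=\tc(f)$ to $\beta+(X-1)\cdot V^{\overline{g}}=\tc(g)$. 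Since being an $F[X]$-module isomorphism is the same as being a linear isomorphism satisfying $T\circ\overline{f}\circ T^{-1}=\overline{g}$, the backward direction of Theorem \ref{theo:a} then delivers an affine isomorphism conjugating $f$ to $g$, so $f\sim g$.

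I do not expect a genuine obstacle here, since all the substantive work lives in Theorems \ref{theo:a} and \ref{theo:b}; the theorem is essentially their composition. The only point demanding care is the correct handling of the existential quantifier in Theorem \ref{theo:b}: in the forward direction I must supply the isomorphism $T$ (obtained from the similarity) to invoke necessity, whereas in the backward direction I must let the $\tau$-equality manufacture $T$ via sufficiency. A secondary, purely bookkeeping point is the base-point independence of $\tc$ and $\tau$, which legitimizes fixing $A$ and $B$ and identifying the trajectory cosets with the algebraic cosets appearing in Theorem \ref{theo:b}.
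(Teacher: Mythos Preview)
Your proposal is correct and follows essentially the same route as the paper: apply Theorem~\ref{theo:a} to reduce affine similarity to the existence of an $F[X]$-module isomorphism carrying $\tc(f)$ to $\tc(g)$, and then invoke Theorem~\ref{theo:b} to characterize that existence by the equality $\tau(\alpha,\overline{f})=\tau(\beta,\overline{g})$, i.e.\ $\tau(f)=\tau(g)$. Your write-up is in fact more explicit than the paper's about separating the two directions and about where the hypothesis $\overline{f}\sim\overline{g}$ enters in the converse, but the logical skeleton is identical.
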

\begin{proof}
From Theorem \ref{theo:a} it follows that $f\sim g$, if and only if, there is a linear isomorphism (of $F[X]-$modules)
$$T:V^{\overline{f}}\longrightarrow V^{\overline{g}} $$
which takes the coset $\tc(f)$ to the coset $\tc(g)$.

According to Theorem \ref{theo:b}, such a linear isomorphism exists, if and only if, 
$$\tau\left(\overrightarrow{Af(A)},\overline{f}\right)=\tau\left(\overrightarrow{Bg(B)},\overline{g}\right) $$
(where $A,B$ are any points of $\mathcal{E}$)
\end{proof}

\section{Invariant Flats} 

We shall study now invariant flats of affine maps, in order to give $\tau$ another meaning of a geometrical nature.

\begin{claim}{\label{claim:h}} 
Let $(\mathcal{E},V)$ be an affine space over a field $F$.
Let $f:\mathcal{E}\longrightarrow\mathcal{E}$ be an affine map. 

Then for every $A\in\mathcal{E}$ and every $W$ a subspace of $V$, the flat $A+W$ is invariant under $f$, if and only if, $\overrightarrow{Af(A)}\in W$ and $W$ is invariant under $\overline{f}$.
\end{claim}
\begin{proof}
Assume first that the flat $A+W$ is invariant under $f$.

In particular, $A\in A+W$ so $f(A)\in A+W$ so $\overrightarrow{Af(A)}\in W$.

Also, $A+W=f(A)+W$.

Now let $\alpha\in W$. Since $A+\alpha\in A+W$, it follows that $f(A+\alpha)\in A+W$.

So $f(A+\alpha)\in f(A)+W$.

That is $f(A)+\overline{f}(\alpha)\in f(A)+W$.

This implies that $\overline{f}(\alpha)\in W$.

Conversely, assume that $\overrightarrow{Af(A)}\in W$ and that $W$ is invariant under $\overline{f}$. Let $B\in A+W$. We have to show that $f(B)\in A+W$.

Our assumption that $B\in A+W$, implies that $\overrightarrow{AB}\in W$. Thus $\overline{f}\left(\overrightarrow{AB}\right)\in W$ (because $W$ is invariant under $\overline{f}$).
Now,
\begin{IEEEeqnarray*}{rCl}
\overrightarrow{Af(B)} &=& \overrightarrow{Af(A)} + \overrightarrow{f(A)f(B)}\\
 &=&\overrightarrow{Af(A)}+\overline{f}\left(\overrightarrow{AB}\right).
\end{IEEEeqnarray*}
$\overrightarrow{Af(A)}, \overline{f}\left(\overrightarrow{AB}\right)\in W$, so 
$\overrightarrow{Af(A)}+ \overline{f}\left(\overrightarrow{AB}\right)\in W$, so
$\overrightarrow{Af(B)}\in W$, so
$f(B)\in A+W$
\end{proof}
\begin{definition} 
Let $f:\mathcal{E}\longrightarrow\mathcal{E}$ be an affine map and let $A\in\mathcal{E}$.

Denote by $\Flat(A,f)$ the flat of $\mathcal{E}$ which passes throught the point $A$ and whose direction is $\left\langle\overrightarrow{Af(A)}\right\rangle$, the cyclic $F[X]-$submodule of $V^{\overline{f}}$, generated by the vector $\overrightarrow{Af(A)}$.

In other words:
$$\Flat(A,f):=A+\left\langle\overrightarrow{Af(A)}\right\rangle . $$
\end{definition}
\begin{claim} 
Let $f:\mathcal{E}\longrightarrow\mathcal{E}$ be an affine map. Let $A\in\mathcal{E}$. Then:
\begin{enumerate}
    \item $A\in\Flat(A,F)$
    \item The flat $\Flat(A,f)$ is invariant under $f$.
    \item If $\mathcal{F}$ is a flat in $\mathcal{E}$ which is invariant under $f$ and which passes throught the point $A$, then $$\Flat(A,f)\subseteq\mathcal{F}. $$
    \item Denote by $\overline{f}'$ the restriction of $\overline{f}$ to $\left\langle\overrightarrow{Af(A)}\right\rangle$.
    That is:
    $$\overline{f}'=\overline{f}\upharpoonright
    \left\langle\overrightarrow{AfA}\right\rangle .$$
    Then:
    $$\dim\left(\Flat(A,f)\right)=\deg\left(\min\hbox{}_{\overline{f}'}(X)\right). $$
    That is, the dimension of the flat $\Flat(A,f)$ is the degree of the minimal polynomial of the linear map $\overline{f}':\left\langle\overrightarrow{Af(A)}\right\rangle\longrightarrow\left\langle\overrightarrow{Af(A)}\right\rangle$.
\end{enumerate}
\end{claim}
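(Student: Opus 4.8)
The plan is to dispatch the four assertions in the order given, leaning on Claim \ref{claim:h} (the characterization of invariant flats) for the first three and on Claim \ref{claim:f} (the structure of cyclic submodules, applied with the degree-one polynomial $P(X)=X$) for the last. Reading the ``$F$'' in the first assertion as a typo for ``$f$'', that assertion is immediate: since $\langle\overrightarrow{Af(A)}\rangle$ is an $F[X]$-submodule it contains $\overline{0}$, so $A=A+\overline{0}\in A+\langle\overrightarrow{Af(A)}\rangle=\Flat(A,f)$. For the second assertion I would apply Claim \ref{claim:h} with $W:=\langle\overrightarrow{Af(A)}\rangle$, which requires checking its two hypotheses: first, $\overrightarrow{Af(A)}\in W$, because $\overrightarrow{Af(A)}=1\cdot\overrightarrow{Af(A)}\in\langle\overrightarrow{Af(A)}\rangle$; and second, $W$ is invariant under $\overline{f}$, because $W$ is an $F[X]$-submodule of $V^{\overline{f}}$ and invariance under $\overline{f}$ is precisely closure under the action of $X$. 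With both hypotheses verified, Claim \ref{claim:h} yields that $\Flat(A,f)$ is $f$-invariant.

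For the third (minimality) assertion I would write the given invariant flat through $A$ as $\mathcal{F}=A+W'$ for some $F$-subspace $W'$ of $V$. Claim \ref{claim:h}, applied to $\mathcal{F}$, forces $\overrightarrow{Af(A)}\in W'$ and $W'$ invariant under $\overline{f}$; that is, $W'$ is an $F[X]$-submodule of $V^{\overline{f}}$ containing $\overrightarrow{Af(A)}$. Since $\langle\overrightarrow{Af(A)}\rangle$ is by definition the smallest submodule containing $\overrightarrow{Af(A)}$, I get $\langle\overrightarrow{Af(A)}\rangle\subseteq W'$, and hence $\Flat(A,f)=A+\langle\overrightarrow{Af(A)}\rangle\subseteq A+W'=\mathcal{F}$.

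For the fourth (dimension) assertion I would first use the elementary fact that the dimension of a flat equals the $F$-dimension of its direction, so $\dim\left(\Flat(A,f)\right)=\dim_F\langle\overrightarrow{Af(A)}\rangle$. Then I would invoke Claim \ref{claim:f} with $P(X)=X$, applied to the linear map $\overline{f}$ and the vector $\alpha:=\overrightarrow{Af(A)}$: part (3) gives $\dim_F\langle\overrightarrow{Af(A)}\rangle=d$, where $d$ is the index of the first non-pivotal element in the sequence $\alpha,\,\overline{f}(\alpha),\,\overline{f}^{2}(\alpha),\ldots$, and part (4) exhibits $\min_{\overline{f}'}(X)$ as a monic polynomial of degree exactly $d$. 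Combining these gives $\dim\left(\Flat(A,f)\right)=d=\deg\left(\min\hbox{}_{\overline{f}'}(X)\right)$, as required.

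The hard part, such as it is, lies entirely in the bookkeeping of the fourth assertion: one must confirm that the restriction $\overline{f}'=\overline{f}\upharpoonright\langle\overrightarrow{Af(A)}\rangle$ is exactly the linear map $L'$ whose minimal polynomial Claim \ref{claim:f} computes. This is settled by observing that the action of $X$ on $V^{\overline{f}}$ is $\overline{f}$, so choosing $P(X)=X$ makes $L'=\overline{f}'$ verbatim. Once this identification is made, every step is a direct consequence of Claim \ref{claim:h}, Claim \ref{claim:f}, and the dimension-of-a-flat formula, so no genuine difficulty remains.
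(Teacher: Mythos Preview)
Your proof is correct and follows essentially the same approach as the paper: the paper's own proof simply cites ``follows from the definition'' for (1), ``Claim~\ref{claim:h}'' for (2) and (3), and ``part~4 of Claim~\ref{claim:f}'' for (4), and you have spelled out exactly those references in detail. The only cosmetic difference is that for (4) you explicitly invoke both parts (3) and (4) of Claim~\ref{claim:f} (and specialize $P(X)=X$), whereas the paper cites only part~4; since part~4 already presupposes the integer $d$ introduced in part~3, this is not a genuine divergence.
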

\begin{proof}
\begin{enumerate}
    \item Follows from the definition
    \item Claim \ref{claim:h}
    \item Claim \ref{claim:h}
    \item Follows from part 4. of Claim \ref{claim:f}.
\end{enumerate}
\end{proof}
\begin{theorem}{\label{theo:d}} 
Let $(\mathcal{E},V)$ be an affine space over a field $F$. We assume that $V$ is finite dimensional.
Let $f:\mathcal{E}\longrightarrow\mathcal{E}$ be an affine map. 

From the coset $\tc(f)$ (a coset of the submodule $(X-1)\cdot V^{\overline{f}}$ in the module $V^{\overline{f}}$) choose a simple representative $\alpha$. By this I mean:\\
If $\tau(f)=0$ (so $\tc(f)=(X-1)\cdot V^{\overline{f}}$) choose $\alpha:=\overline{0}$. \\
If $\tau(f)\geq 1$, choose an $\alpha\in V_1^{\overline{f}}$ such that the cyclic submodule $\left\langle\alpha\right\rangle$ is complemented in $V_1^{\overline{f}}$ (see Claim \ref{claim:e}).

After you chose the simple representative $\alpha$ from the coset $\tc(f)$, choose a point $A\in\mathcal{E}$, such that $\alpha=\overrightarrow{Af(A)}$. Then:
$$\dim\left(\Flat(A,f)\right)=\tau(f), $$
so we found a flat of $\mathcal{E}$ which is invariant under $f$, passes through the point $A$, and is of dimension $\tau(f)$. Its direction is a complemented submodule in $V_1^{\overline{f}}$
\end{theorem}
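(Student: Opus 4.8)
The plan is to reduce the whole statement to the dimension formula established just above, namely $\dim(\Flat(A,f))=\deg(\min_{\overline{f}'}(X))$, where $\overline{f}':=\overline{f}\upharpoonright\langle\alpha\rangle$ and $\alpha=\overrightarrow{Af(A)}$. Once that formula is in hand, it suffices to compute the degree of the minimal polynomial of the restriction $\overline{f}'$ to the cyclic submodule $\langle\alpha\rangle$, and everything comes down to algebra that has already been carried out.

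First I would dispose of the degenerate case $\tau(f)=0$. Here the prescription sets $\alpha:=\overline{0}$, and one must exhibit a point $A$ with $\overrightarrow{Af(A)}=\overline{0}$. Such a point exists precisely because $\tau(f)=0$ means $\tc(f)=(X-1)\cdot V^{\overline{f}}$, so $\overline{0}\in\tc(f)$ and $f$ has a fixed point. Then $\Flat(A,f)=A+\langle\overline{0}\rangle=\{A\}$ is a single point, of dimension $0=\tau(f)$, and its (zero) direction is trivially complemented.

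Next, the main case $\tau(f)\geq 1$. I would first note that $\tau(f)\geq 1$ forces $X-1\mid\min_{\overline{f}}(X)$: otherwise Remark part~2 would give $\tau(\gamma,\overline{f})=0$ for every $\gamma$. Hence we may write $X-1=P_1(X)$ and invoke Claim~\ref{claim:g}. By the construction of a simple representative, $\alpha\in V_1^{\overline{f}}$ and the cyclic submodule $\langle\alpha\rangle$ is complemented in $V_1^{\overline{f}}$, so Claim~\ref{claim:g} yields
$$\Ann(\alpha)=\left\langle P_1(X)^{\tau(\alpha,\overline{f})}\right\rangle=\left\langle (X-1)^{\tau(f)}\right\rangle,$$
where the second equality uses $\tau(\alpha,\overline{f})=\tau(f)$, which holds by the definition of $\tau(f)$ together with $\alpha=\overrightarrow{Af(A)}$.

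It then remains to turn this annihilator into a minimal polynomial. Since $\langle\alpha\rangle$ is a cyclic $F[X]$-module, its module annihilator coincides with $\Ann(\alpha)$, and the monic generator of that ideal is exactly the minimal polynomial of the operator by which $X$ acts on $\langle\alpha\rangle$, i.e. $\min_{\overline{f}'}(X)=(X-1)^{\tau(f)}$. Reading off the degree and feeding it into the dimension formula gives $\dim(\Flat(A,f))=\deg(\min_{\overline{f}'}(X))=\tau(f)$, while $\langle\alpha\rangle$ is complemented in $V_1^{\overline{f}}$ by the choice of $\alpha$. The only step deserving care, rather than being a direct citation, is this last identification of the minimal polynomial with the monic generator of the cyclic annihilator; alternatively one can bypass it entirely by applying parts~3 and~4 of Claim~\ref{claim:f} with $P(X)=X-1$, which compute $\dim\langle\alpha\rangle$ and $\min_{\overline{f}'}(X)$ directly from the first linear dependence in the sequence $\alpha,(X-1)\cdot\alpha,(X-1)^2\cdot\alpha,\dots$. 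I do not anticipate any genuine obstacle: the theorem is essentially an assembly of Claim~\ref{claim:g}, Claim~\ref{claim:f}, and the dimension formula.
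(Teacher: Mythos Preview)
Your proposal is correct and follows essentially the same route as the paper: both treat $\tau(f)=0$ by exhibiting a fixed point, and for $\tau(f)\geq 1$ both invoke Claim~\ref{claim:g} to obtain $\Ann(\alpha)=\langle (X-1)^{\tau(f)}\rangle$, identify this with $\min_{\overline{f}'}(X)$, and then read off $\dim\langle\alpha\rangle$ via part~4 of Claim~\ref{claim:f}. Your additional remarks (that $\tau(f)\geq 1$ forces $X-1\mid\min_{\overline{f}}(X)$, and the explicit identification of the annihilator generator with the minimal polynomial) are welcome clarifications the paper leaves implicit.
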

\begin{proof}
If $\tau(f)=0$ then $\alpha=\overline{0}$ and $\overrightarrow{Af(A)}=\overline{0}$ so $f(A)=A$.

$\Flat(A,f)=\{A\}$ and indeed in this case $\Flat(A,f)=A+\langle\overline{0}\rangle$.

Assume now that $\tau(f)\geq 1$. Write $k:=\tau(f)$. So we chose $\alpha\in\tc(f)$ (and $A\in\mathcal{E}$ such that $\overrightarrow{Af(A)}=\alpha$) such that $\alpha\in V_1^{\overline{f}}$, and the cyclic submodule $\langle\alpha\rangle$ is complemented in $V_1^{\overline{f}}$.

It follows from Claim \ref{claim:g} that $\Ann(\alpha)=\left\langle P_1(X)^{\tau(\alpha,\overline{f})}\right\rangle$.

That is $\Ann(\alpha)=\left\langle(X-1)^k\right\rangle$.

So, again, if we denote by $\overline{f}'$ the restriction of $\overline{f}$ to $\langle\alpha\rangle$, then 
$$\min\hbox{}_{\overline{f}'}(X)=(X-1)^k.$$
Thus $\dim\left(\langle\alpha\rangle\right)=k$ (part~4 of Claim~\ref{claim:f}).

Remember that $\Flat(A,f)=A+\left\langle\overrightarrow{Af(A)}\right\rangle = A+\langle\alpha\rangle$.

So $\dim(\Flat(A,f))=k$.
\end{proof}
\begin{theorem}{\label{theo:e}} 
Let $f:\mathcal{E}\longrightarrow\mathcal{E}$ be an affine map.

Then any flat of $\mathcal{E}$ which is invariant under $f$, has dimension $\geq\tau(f)$.
\end{theorem}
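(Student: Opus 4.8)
The plan is to reduce this geometric statement to a purely algebraic inequality via Claim~\ref{claim:h}, and then to read the bound off the cyclic submodule generated by a trajectory vector.

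First I would take an arbitrary $f$-invariant flat $\mathcal{F}$ and write it as $\mathcal{F}=B+W$ for some $B\in\mathcal{F}$ and some subspace $W\le V$. By Claim~\ref{claim:h}, invariance of $\mathcal{F}$ forces $\overrightarrow{Bf(B)}\in W$ together with $W$ being invariant under $\overline{f}$. Setting $\delta:=\overrightarrow{Bf(B)}$ and $L:=\overline{f}$, and recalling that $\tau(f)$ is independent of the chosen base point so that $\tau(f)=\tau(\delta,L)$, the goal becomes the algebraic assertion: if $\delta\in W$ and $W$ is an $L$-invariant $F[X]$-submodule, then $\dim_F W\ge\tau(\delta,L)$. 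If $X-1\nmid\min_L$, then $\tau(f)=0$ and $\dim_F W\ge 0$ settles the case, so from here I assume $X-1=P_1(X)$.

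Next I would pass to the primary decomposition $V=\bigoplus_i V_i^L$ and write $\delta=\delta_1+\cdots+\delta_\ell$ with $\delta_i\in V_i^L$. Two facts do the work. First, each primary projection is a polynomial in $L$, so $\delta_1\in\langle\delta\rangle$; since $W$ is a genuine $F[X]$-submodule containing $\delta$, it contains all of $\langle\delta\rangle$, and in particular $\langle\delta_1\rangle\subseteq W$. Second, $\delta-\delta_1=\sum_{i\ge 2}\delta_i$ lies in $\bigoplus_{i\ge 2}V_i^L\subseteq(X-1)\cdot V^L$ by the earlier claim that $V_i^L\subseteq(X-1)\cdot V^L$ for $i\neq 1$, so $\delta\equiv\delta_1\ (\modp\ (X-1)\cdot V^L)$; as $\tau$ depends only on the coset, $\tau(\delta,L)=\tau(\delta_1,L)$.

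It then remains to compare $\dim_F\langle\delta_1\rangle$ with $\tau(\delta_1,L)$. Since $\delta_1\in V_1^L$ one has $\Ann(\delta_1)=\langle(X-1)^d\rangle$ for some $d\le n_1$; then $\dim_F\langle\delta_1\rangle=d$ because the minimal polynomial of $L$ restricted to $\langle\delta_1\rangle$ is $(X-1)^d$ (part~4 of Claim~\ref{claim:f}), while $(X-1)^d\cdot\delta_1=\overline{0}$ places $\delta_1\in\ker((L-\id_V)^d)\subseteq\im(L-\id_V)+\ker((L-\id_V)^d)$, giving $\tau(\delta_1,L)\le d$. Chaining the inequalities yields $\dim_F W\ge\dim_F\langle\delta_1\rangle=d\ge\tau(\delta_1,L)=\tau(\delta,L)=\tau(f)$, which is what is wanted. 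The one step I expect to be the crux, and which deserves explicit care, is the passage from $\delta$ to its primary component $\delta_1$: I must justify both that $\langle\delta_1\rangle$ genuinely sits inside $W$ (invoking that $W$ is closed under the full $F[X]$-action, not merely an $F$-subspace) and that dropping the remaining components $\delta_2,\dots,\delta_\ell$ leaves $\tau$ unchanged. Once that reduction is in place, the remaining content is the elementary cyclic-module dimension count already available from Claim~\ref{claim:f}.
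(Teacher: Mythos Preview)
Your proof is correct and follows essentially the same route as the paper: reduce via Claim~\ref{claim:h} to a trajectory vector, pass to its $V_1^L$-primary component, note that $\tau$ is unchanged, and bound below by the degree $d$ of its annihilator. The only cosmetic difference is that you invoke directly that the primary projection is a polynomial in $L$ (so $\langle\delta_1\rangle\subseteq\langle\delta\rangle\subseteq W$), whereas the paper reaches $\dim\langle\alpha\rangle\ge d$ via the $\lcm$/product of the minimal polynomials of the restrictions to $\langle\beta\rangle$ and $\langle\gamma\rangle$; your containment argument is in fact the one the paper uses in the subsequent Theorem~\ref{theo:g}.
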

\begin{proof}
It suffices to prove our assertion for flats of the form $\Flat(A,f)$ (because every flat which is invariant under $f$ contains such a flat).

But $\Flat(A,f)=A+\left\langle\overrightarrow{Af(A)}\right\rangle$. \\
It suffices therefore to prove that for every $A\in\mathcal{E}$,
$$\dim\left(\left\langle\overrightarrow{Af(A)}\right\rangle\right) \geq \tau(f).$$
Assume (using the notation at the Algebraic Set-Up) that 
$$\min\hbox{}_{\overline{f}}(X)=P_1(X)^{n_1}\cdot P_2(X)^{n_2}\cdot P_3(X)^{n_3}\cdot\ldots\cdot P_\ell(X)^{n_\ell},$$
where $P_1(X)=X-1$. (Remember that if $X-1\nmid\min_{\overline{f}}(X)$) then $\tau(f)=0$ and we are done).

For each $i\;(1\leq i\leq\ell)$ write $$V_i^{\overline{f}}:=\ker\left(P_i(\overline{f})^{n_i}\right) $$
and remember that
$$V^{\overline{f}}=V_1^{\overline{f}}\oplus V_2^{\overline{f}}\oplus V_3^{\overline{f}}\oplus \ldots\oplus V_\ell^{\overline{f}}. $$
Now let $A\in\mathcal{E}$. Write $\alpha:=\overrightarrow{Af(A)}$.
We have to show that
$$\dim\left(\langle\alpha\rangle\right)\geq\tau(f). $$
Write $\alpha=\beta+\gamma$ where $\beta\in V_1^{\overline{f}}$ and $\gamma\in\oplus_{i=2}^{\ell}V_i^{\overline{f}}$.
$$ \Ann(\alpha)=\Ann(\beta)\cap\Ann(\gamma)$$
So if
\begin{IEEEeqnarray*}{rCl}
\overline{f}' & := & \overline{f}\upharpoonright \langle\alpha\rangle\\
\overline{f}'' & := & \overline{f}\upharpoonright \langle\beta\rangle\\
\overline{f}'''& := & \overline{f}\upharpoonright \langle\gamma\rangle,
\end{IEEEeqnarray*}
then
\begin{IEEEeqnarray*}{rCl}
\min\hbox{}_{\overline{f}'}(X) & = & \lcm\left(\min\hbox{}_{\overline{f}''}(X)\, ,\,\min\hbox{}_{\overline{f}'''}(X)\right)\\
 & = & \min\hbox{}_{\overline{f}''}(X) \cdot\min\hbox{}_{\overline{f}'''}(X).
\end{IEEEeqnarray*}
Notice that $\gamma\in(X-1)\cdot V^{\overline{f}}$\\
so $\alpha\equiv\beta\;(\mod(X-1)\cdot V^{\overline{f}})$\\
so $\tau\left(\alpha,\overline{f}\right)=\tau\left(\beta,\overline{f}\right)$.\\
Now, since $\beta\in V_1^{\overline{f}}$, it follows that
$$\min\hbox{}_{\overline{f}''}(X)=P_1(X)^d $$
where
$$\tau\left(\beta,\overline{f}\right)\leq d\leq n_1 $$
But
$$ \tau\left(\beta,\overline{f}\right)=\tau\left(\alpha,\overline{f}\right)$$
so
$$\tau\left(\alpha,\overline{f}\right)\leq d .$$
\begin{IEEEeqnarray*}{rCl}
\min\hbox{}_{\overline{f}'}(X) & = & \min\hbox{}_{\overline{f}''}(X)\cdot\min\hbox{}_{\overline{f}'''}(X)\\
  & = & P_1(X)^d\cdot\min\hbox{}_{\overline{f}'''}(X)
\end{IEEEeqnarray*}
Thus
$$\deg\left(\min\hbox{}_{\overline{f}'}(X)\right)\geq d $$
So
$$\deg\left(\min\hbox{}_{\overline{f}'}(X)\right)\geq \tau\left(\alpha,\overline{f}\right). $$
Which means $$\dim\left(\langle\alpha\rangle\right)\geq\tau(f). $$
\end{proof}

The Invariance Level:

In \cite{Tarrida} the notion of invariance level of an affine map $f$ is defined and is denoted by $\rho(f)$.
\begin{definition} 
Let $f:\mathcal{E}\longrightarrow\mathcal{E}$ be an affine map. If $\mathcal{E}$ has a flat, which is invariant under $f$, of dimension $k$, but not one of smaller dimension, then we say that the invariance level of $f$ is $k$. We write $\rho(f)=k$.
\end{definition}

Our Theorem \ref{theo:d} and Theorem \ref{theo:e} show:
\begin{theorem}{\label{theo:f}} 
For every affine map
$$f:\mathcal{E}\longrightarrow\mathcal{E},\quad\rho(f)=\tau(f). $$
\qed
\end{theorem}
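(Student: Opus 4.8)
The plan is to prove the two inequalities $\rho(f)\le\tau(f)$ and $\rho(f)\ge\tau(f)$ separately, each being essentially a restatement of one of the two preceding theorems, and then combine them. Since both directions have already been established, no genuine obstacle remains; the work has been front-loaded into Theorem~\ref{theo:d} and Theorem~\ref{theo:e}.

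First I would establish $\rho(f)\le\tau(f)$. By Theorem~\ref{theo:d}, after choosing a simple representative $\alpha$ of the coset $\tc(f)$ together with a point $A\in\mathcal{E}$ satisfying $\overrightarrow{Af(A)}=\alpha$, the flat $\Flat(A,f)$ is invariant under $f$ and has dimension exactly $\tau(f)$. This does two things at once: it guarantees that $\mathcal{E}$ carries at least one $f$-invariant flat, so that the invariance level $\rho(f)$ is well-defined, and it exhibits an invariant flat of dimension $\tau(f)$. Since $\rho(f)$ is the minimal dimension of an invariant flat, it cannot exceed the dimension of this particular one, whence $\rho(f)\le\tau(f)$.

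Next I would establish the reverse inequality $\rho(f)\ge\tau(f)$, which is immediate from Theorem~\ref{theo:e}: every flat of $\mathcal{E}$ invariant under $f$ has dimension at least $\tau(f)$. In particular, any invariant flat of minimal dimension realizing $\rho(f)$ is such a flat, so $\rho(f)\ge\tau(f)$. Combining the two inequalities gives $\rho(f)=\tau(f)$. The only subtlety worth flagging is that the well-definedness of $\rho(f)$ is not automatic from its definition alone; it is precisely Theorem~\ref{theo:d} that supplies an invariant flat and thereby ensures the set of dimensions of invariant flats is nonempty before the minimum is taken.
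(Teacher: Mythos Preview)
Your proposal is correct and follows exactly the paper's own approach: the paper states the theorem immediately after Theorems~\ref{theo:d} and~\ref{theo:e} with nothing more than ``Our Theorem~\ref{theo:d} and Theorem~\ref{theo:e} show:'' followed by a \qed. Your added remark that Theorem~\ref{theo:d} also guarantees the existence of an invariant flat (so that $\rho(f)$ is well-defined) is a welcome clarification.
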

We can do better than Theorem~\ref{theo:e}. In Theorem~\ref{theo:e} we show that every flat which is invariant under $f$, is of dimension $\geq\tau(f)$.

One can actually show that if $\mathcal{F}$ is a flat which is invariant under $f$, and $\dim(\mathcal{F})=\tau(f)$, then $\dir(\mathcal{F})\subseteq V_1^{\overline{f}}$.

By looking closely at the decomposition $\alpha=\beta + \gamma$ in the proof of Theorem~\ref{theo:e}, one can show that not only 
$\dim\left(\right\langle\alpha\rangle)\geq\dim\left(\langle\beta\rangle\right)$, but also that $\langle\alpha\rangle\supseteq\langle\beta\rangle$ (and $\langle\alpha\rangle\supseteq\langle\gamma\rangle$). We state it as:
\begin{theorem}{\label{theo:g}} 
Assume $X-1\,\mid\,\min\hbox{}_{\overline{f}}(X)$, say $X-1=P_1(X)$.

Let $\mathcal{F}$ be a flat of $\mathcal{E}$ which is invariant under $f$.

If $\dim(\mathcal{F})=\tau(f)$, then $\dir(\mathcal{F})\subseteq V_1^{\overline{f}}$.
\end{theorem}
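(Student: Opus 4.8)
The plan is to reduce the statement to the purely algebraic situation already analysed in the proof of Theorem~\ref{theo:e}, and then squeeze a little more out of the degree bookkeeping carried out there.

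First I would pick a point $A\in\mathcal{F}$ and set $W:=\dir(\mathcal{F})$ and $\alpha:=\overrightarrow{Af(A)}$. Since $\mathcal{F}=A+W$ is invariant under $f$, Claim~\ref{claim:h} gives $\alpha\in W$ and tells us that $W$ is invariant under $\overline{f}$; thus $W$ is an $F[X]$-submodule of $V^{\overline{f}}$, and from $\alpha\in W$ we obtain $\langle\alpha\rangle\subseteq W$. Note also that $\dim(W)=\dim(\mathcal{F})=\tau(f)$ by hypothesis.

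The key step is a dimension squeeze. By (the proof of) Theorem~\ref{theo:e} we have $\dim\left(\langle\alpha\rangle\right)\geq\tau(f)$, while $\dim(W)=\tau(f)$. Since $\langle\alpha\rangle\subseteq W$ and both spaces are finite dimensional, these inequalities force $\langle\alpha\rangle=W$. In particular the minimal invariant flat is necessarily the cyclic flat $\Flat(A,f)$, and it now suffices to prove $\langle\alpha\rangle\subseteq V_1^{\overline{f}}$.

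Finally I would reuse the decomposition $\alpha=\beta+\gamma$ from the proof of Theorem~\ref{theo:e}, where $\beta\in V_1^{\overline{f}}$ and $\gamma\in\bigoplus_{i=2}^{\ell}V_i^{\overline{f}}$. There it is shown, writing $\overline{f}'=\overline{f}\upharpoonright\langle\alpha\rangle$, $\overline{f}''=\overline{f}\upharpoonright\langle\beta\rangle$ and $\overline{f}'''=\overline{f}\upharpoonright\langle\gamma\rangle$, that $\min_{\overline{f}''}(X)=P_1(X)^d$ with $\tau(f)\leq d$ and that $\min_{\overline{f}'}(X)=P_1(X)^d\cdot\min_{\overline{f}'''}(X)$, whence $\deg\left(\min_{\overline{f}'}(X)\right)=d+\deg\left(\min_{\overline{f}'''}(X)\right)$. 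By part~4 of Claim~\ref{claim:f} the left-hand degree equals $\dim\left(\langle\alpha\rangle\right)=\tau(f)$. Combining $\tau(f)=d+\deg\left(\min_{\overline{f}'''}(X)\right)$ with $d\geq\tau(f)$ and $\deg\left(\min_{\overline{f}'''}(X)\right)\geq 0$ forces $\deg\left(\min_{\overline{f}'''}(X)\right)=0$, i.e. $\langle\gamma\rangle=\{\overline{0}\}$ and hence $\gamma=\overline{0}$. Therefore $\alpha=\beta\in V_1^{\overline{f}}$, and since $V_1^{\overline{f}}$ is a submodule we conclude $\dir(\mathcal{F})=W=\langle\alpha\rangle\subseteq V_1^{\overline{f}}$.

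I expect the only genuinely delicate point to be the squeeze $\langle\alpha\rangle=W$: one must notice that the direction of an invariant flat is an $F[X]$-submodule containing $\alpha$, so that the lower bound of Theorem~\ref{theo:e} and the dimension hypothesis pinch $\langle\alpha\rangle$ and $W$ to equal dimension and hence equality. Everything after that is the degree arithmetic already present in the proof of Theorem~\ref{theo:e}, now read as an equality rather than an inequality.
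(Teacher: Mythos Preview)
Your proof is correct. Both you and the paper first reduce to $\mathcal{F}=\Flat(A,f)=A+\langle\alpha\rangle$ via the dimension squeeze $\langle\alpha\rangle\subseteq W$ with $\dim\langle\alpha\rangle\ge\tau(f)=\dim W$, and both then split $\alpha=\beta+\gamma$ along the primary decomposition with $\beta\in V_1^{\overline f}$.

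The only real difference is the final step. The paper makes the containment $\langle\beta\rangle\subseteq\langle\alpha\rangle$ explicit via the Lagrange interpolants (so that $\beta=Q_1(X)\cdot\alpha$), observes that $\beta\in\tc(f)$ produces an invariant flat $B+\langle\beta\rangle$ of dimension $\ge\tau(f)$, and then squeezes $\langle\beta\rangle=\langle\alpha\rangle$, whence $\dir(\mathcal{F})=\langle\beta\rangle\subseteq V_1^{\overline f}$. You instead read the degree identity $\deg\min_{\overline f'}=d+\deg\min_{\overline f'''}$ from Theorem~\ref{theo:e} as an equality with $\tau(f)$, forcing $\deg\min_{\overline f'''}=0$ and hence $\gamma=\overline 0$, so $\alpha=\beta\in V_1^{\overline f}$. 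Your route avoids invoking the Lagrange interpolants and a second invariant flat; the paper's route has the bonus of exhibiting the stronger containment $\langle\beta\rangle\subseteq\langle\alpha\rangle$ announced just before the theorem. Both arguments are short and essentially equivalent in difficulty.
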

\begin{proof}
Let
\[ \min\hbox{}_{\overline{f}}(X)=P_1(X)^{n_1}\cdot P_2(X)^{n_2}\cdot P_3(X)^{n_3}\cdot\ldots\cdot P_\ell(X)^{n_\ell}\]
where for each index $i\;(1\leq i\leq\ell)$ the polynomial $P_i(X)$ is prime and (monic) and for each $i,j\;(i\neq j)$
\[\gcd\left(P_i(X),P_j(X)\right)=1\]
Assume $P_1(X)=X-1$.

Let $Q_1(X),\,Q_2(X),\,\ldots ,Q_\ell(X)$ be the sequence of "Lagrange interpolants" associated with the sequence
$P_1(X)^{n_1}\cdot P_2(X)^{n_2}\cdot P_3(X)^{n_3}\cdot\ldots\cdot P_\ell(X)^{n_\ell}$.

Remember that in the ring $F[X]$:
\[Q_j(X)\equiv 1\quad(\modp P_j(X)^{n_j}).\]
and if $i\neq j$ then
\[Q_j(X)\equiv 0\quad(\modp P_i(X)^{n_i}).\]
\end{proof}
The sequence $Q_1(X),\,Q_2(X),\,\ldots ,Q_\ell(X)$ \\
is uniquely determined $\left(\modp\;\min\hbox{}_{\overline{f}}(X)\right)$.

Remember also that
\[Q_1(X)+Q_2(X)+\ldots +Q_\ell(X)\equiv 1\quad \left(\modp\;\min\hbox{}_{\overline{f}}(X)\right).\]
Now, let $\mathcal{F}$ be a flat of $\mathcal{E}$ which is invariant under $f$.

Assume \[\dim(\mathcal{F})=\tau(f).\]

Clearly, there is a point $A\in\mathcal{E}$, such that $\mathcal{F}=\Flat(A,f).$

Write $\alpha := \overrightarrow{Af(A)}.$
\[\mathcal{F}=A+\langle\alpha\rangle .\]
\begin{IEEEeqnarray*}{rCl}
\alpha &=& 1\cdot \alpha = \left(Q_1(X)+Q_2(X)+\ldots +Q_\ell(X)\right)\cdot\alpha\\
\alpha &=& Q_1(X)\cdot\alpha +Q_2(X)\cdot\alpha +\ldots +Q_{\ell}(X)\cdot\alpha.
\end{IEEEeqnarray*}
For each index $i\;(1\leq i\leq\ell)$
\[Q_i(X)\cdot\alpha\in V_i^{\overline{f}}. \] 
Write $\beta := Q_1(X)\cdot\alpha$\\
and $\gamma := Q_2(X)\cdot\alpha + Q_3(X)\cdot\alpha + \ldots + Q_\ell(X)\cdot \alpha$.\\
Thus $\alpha=\beta+\gamma$.\\
Since $\gamma\in (X-1)\cdot V^{\overline{f}}$ it follows that 
\[\alpha\equiv \beta\quad\left(\modp\;(X-1)\cdot V^{\overline{f}}\right) \]
and therefore $\beta\in \tc(f)$.

There exists a point $B\in\mathcal{E}$ such that $\beta =\overline{Bf(B)}$.

Notice $\Flat(B,f)=B+\langle\beta\rangle$.

The flat $B+\langle\beta\rangle$ is invariant under $f$.

Therefore $\dim\left(\langle\beta\rangle\right) 
\geq\tau(f)$.

But $\beta=Q_1(X)\cdot\alpha$\\
so $\langle\beta\rangle\subseteq\langle\alpha\rangle$.

From the assumption that $\dim\left(\langle\alpha\rangle\right)=\tau(f)$, \\
it follows that
\[\langle\beta\rangle=\langle\alpha\rangle\]
Thus
\[\dir(\mathcal{F})=\langle\beta\rangle\]
and
\[\langle\beta\rangle\subseteq V_1^{\overline{f}}\]
\qed

The moral of our journey, in conclusion, is that the study of $(X-1)\cdot V_1^{\overline{f}}$, is crucial for understanding questions such as similarity and invariance level.

\begin{remark}
In an independant work \cite{Hou} Xiang-dong Hou finds a system of distinct representatives of the conjugacy class of $AGL(m,F)$
\end{remark}


\printbibliography

\end{document}